\documentclass[11pt,a4paper]{article}

\usepackage{lipsum}
\usepackage{url}
\usepackage{float}
\usepackage{amsfonts}
\usepackage{amsmath}
\usepackage{amsthm}
\usepackage{graphicx}
\usepackage{epstopdf}
\usepackage{algorithmic}
\usepackage{adjustbox}
\usepackage{tikz}
\usepackage{tikz-3dplot}
\usepackage{verbatim} % for comment
\usepackage{subcaption}
\usepackage{pgfplots}
\usepackage{amssymb}
\usepackage{amsopn}
\usepgfplotslibrary{groupplots}

\ifpdf
  \DeclareGraphicsExtensions{.eps,.pdf,.png,.jpg}
\else
  \DeclareGraphicsExtensions{.eps}
\fi

\newcommand{\iu}{\mathrm{i}} % imaginary unit

\newcommand{\cD}{\mathcal{D}}
\newcommand{\cX}{\mathcal{X}}

\DeclareMathOperator{\Real}{Re}
\DeclareMathOperator{\Imag}{Im}

\theoremstyle{plain}% default
\newtheorem{thm}{Theorem}[section]
\newtheorem{lemma}[thm]{Lemma}
\newtheorem{proposition}[thm]{Proposition}

\theoremstyle{definition}

\newtheorem{conjecture}{Conjecture}[section]

\theoremstyle{remark}
\newtheorem*{remark}{Remark}

\begin{document}
\title{A perfectly matched layer approach \\ for the spectral split-step Pad\'e method}

% Authors: full names plus addresses.
\author{Daniel Walsken$^{1}$,
Matthias Ehrhardt$^{1,}$\footnote{Corresponding Author, email: ehrhardt@uni-wuppertal.de},
Pavel Petrov$^{2}$}

	\date{$^1$ Applied and Computational Mathematics, University of Wuppertal, Gaußstrasse 20, Wuppertal, 42119, Germany\\ 
		$^2$ Instituto de Matem\'atica Pura e Aplicada, Rio de Janeiro, Brazil\\
		E-mail: walsken@uni-wuppertal.de$^1$, 
        ehrhardt@uni-wuppertal.de$^{1,*}$
        pavel.petrov@impa.br$^2$}
        
        \maketitle

    \begin{tikzpicture}[remember picture,overlay]
\node[anchor=north east,inner sep=20pt] at (current page.north east)
	{\includegraphics[scale=0.2]{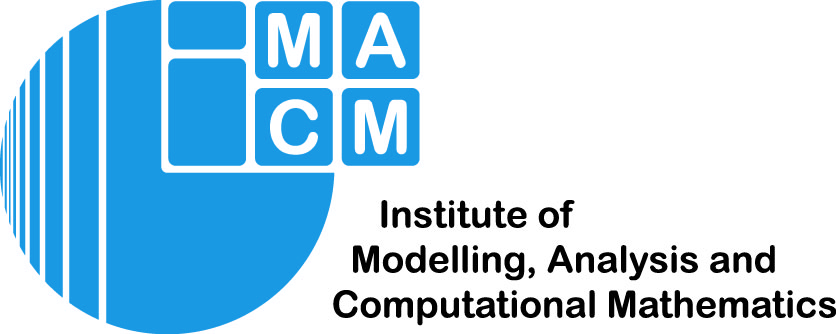}};
\end{tikzpicture}

% REQUIRED
\begin{abstract}
The split-step-Pad\'e (SSP) method is widely used to model wave phenomena in various applications, including radio physics, optics and acoustics. 
In this method, the propagator of the one-way counterpart of the Helmholtz equation is computed through its Pad\'e approximant
and a finite-difference discretization of the transverse operator. 
This work develops and validates numerically a spectral counterpart of the SSP method. 
A key challenge in practical applications is inverting the transverse operator in the presence of perfectly matched layers (PMLs), 
which are commonly used to truncate the computational domain. Such inversion can be accomplished using Krylov subspace methods, 
which converge rapidly, provided that a suitable preconditioner is used. 
We also study the analytical properties of the spectral SSP marching scheme under periodicity conditions in the transverse variable. 
We validate the newly developed spectral SSP method numerically in two realistic test scenarios from radio physics and underwater acoustics.
\end{abstract}

\textbf{Keywords:} 
Split-Step Pad\'e method, Perfectly Matched Layer, Krylov method, wide-angle parabolic equation, Spectral discretization

\textbf{MSC:}
68Q25, 68R10, 68U05

%%%%%%%%%%%%%%%%%%%%%%%%
%\newtheorem{theorem}{Theorem}[section]
%\newtheorem{lemma}[theorem]{Lemma}
%\newtheorem{proposition}[theorem]{Proposition}
%\newtheorem{corollary}[theorem]{Corollary}
%\newtheorem{conjecture}[theorem]{Conjecture}
%\newtheorem{remark}[theorem]{Remark}

%%%%%%%%%%%%%%%%%%%%%%%%%%%%%%%%%%%%%%%%%%%%%%%

%%%%%%%%%%%%%%%%%%%%%%%%%%%%%%%%%%%%%%%%%%%%%%%
\section*{Introduction}
The propagation of waves in inhomogeneous media is a fundamental problem in applied mathematics and computational physics.
It has numerous applications in optics, radio physics, underwater and atmosphere acoustics, and geophysical exploration. 
The governing model is usually the Helmholtz equation, whose direct numerical solution in large or unbounded domains is computationally expensive due highly oscillatory nature of its solutions that imposes severe discretization step restriction.
A common approach to reducing computational complexity is to use parabolic approximations of the Helmholtz equation, which were first introduced by Leontovich and Fock \cite{leontovich1946, vlasov1995parabolic}. 
These models transform the original boundary value problem into an initial value problem in a preferred propagation direction, thereby significantly reducing computational effort. 
Although such reformulation neglects back-scattered waves, it has proven to be extremely efficient in many applications, where parabolic equations have become main computational tools \cite{claerbout1976, jensen2011computational, levy2000parabolic, lu2006some}, especially after the introduction of \textit{split-step Fourier} (SSF) spectral solvers (mostly for narrow-angle parabolic equations). 
 
However, classical parabolic equation methods are limited in their ability to accurately capture wide-angle propagation effects.
This limitation has been overcome by the development of \textit{wide-angle parabolic equations} (WAPEs), which approximate the square-root operator arising from the factorization of the Helmholtz operator using rational Pad\'e approximants \cite{abawi1997coupled, claerbout1976, collins93, jensen2011computational, levy2000parabolic, popov1977}. 
It is important that WAPEs substantially improve angular accuracy while preserving computational efficiency paraxial equations. 
The next major improvement of WAPEs was the invention of the \textit{split-step Pad\'e} (SSP) method \cite{avilov1995, collins93} that aims at approximating the \emph{propagator} of the operator square root (i.e., the exponential that emerges from integration of one-way Helmholtz equation over small step), by a Pad\'e series. 
This has proven to be much more efficient than approximating the square-root operator itself, both in terms of accuracy and computational efficiency. 
A particularly impressive fact about SSP method is that in can be used with integration step of about 10 wavelengths \cite{collins93, PETROV2020115526}.
Classical SSP methods are widely used in many areas such as optics, acoustics and radio physics (both for solving direct and inverse problems) where they are often considered a huge improvement as compared to SSF/WAPE \cite{Lu2003, lu2006some, collins2019parabolic, lin2013, lytaev2018nonlocal, lytaev2018splitirregular, petrov024generalization, wu2023higher, lytaev2026}.

Recently, it was shown that the advantages of the SSF and SSP methods could be combined by computing Pad\'e approximants at each step of the marching scheme via the fast Fourier transform (FFT) \cite{Daniel1}.
Because they are more accurate for smooth solutions, spectral methods are a good choice for solving parabolic wave equations numerically \cite{boyd2001chebyshev, feit1978light}.
In particular, Fourier-based discretizations yield spectral (often exponential) convergence and can be efficiently implemented using FFTs \cite{trefethen2000spectral}.

%Due to their superior accuracy for smooth solutions, spectral methods seem to be a natural choice for the numerical solution of parabolic wave equations, \cite{antoine2020pseudospectral, boyd2001chebyshev, feit1978light, trefethen2000spectral}.
%In particular, Fourier-based discretizations yield spectral (often exponential) convergence and can be efficiently implemented using fast Fourier transforms; see \cite{trefethen2000spectral}. 

A central challenge in the numerical treatment of wave propagation problems is the modeling of unbounded domains \cite{givoli2013numerical, hagstrom1999radiation, shen2009some, TSYNKOV1998465}. 
Truncating the computational domain generally leads to artificial reflections at the boundaries, which can spoil the numerical solution % in the area of interest
due to non-physical reflections at the artificial boundaries. 

Among the various approaches developed to address this issue, \textit{perfectly matched layers} (PML), introduced by B\'erenger \cite{berenger1994perfectly}, have proven to be particularly effective. 
%%% introduced by Bérenger \cite{berenger1994perfectly} and extended to acoustic waves in \cite{liu1997perfectly}.
%
PML involves surrounding the computational domain with an artificial absorbing layer obtained via complex coordinate stretching, which leads to exponential attenuation of outgoing waves without reflection at the artificial boundary.
PML techniques have been extensively studied for various wave propagation models, including the Helmholtz and Maxwell equations
%; see, e.g., 
\cite{chew19943d, hesthaven1998analysis, liu1997perfectly}  %pled2022review}.
Their mathematical properties, such as well-posedness, stability, and convergence, have been analyzed in several works, including \cite{appelo2006perfectly, becache2000analysis, kim2010}. 
However, incorporating PML techniques into split-step Pad\'e methods in combination with spectral discretizations is considerably less developed, although several important ideas have been recently proposed \cite{antoine2020pseudospectral, antoine2022pseudospectral}.

In the spectral SSP framework, introducing a PML leads to additional analytical and numerical challenges. 
In particular, the complex coordinate stretching destroys the simple diagonal structure of the differential operator in Fourier space, 
which is essential for efficient inversion in the classical spectral setting.
Consequently, the resulting operators cannot be inverted analytically in Fourier space,
therefore iterative methods, such as Krylov subspace solvers (e.g., GMRES \cite{saad1986GMRES, trefethen1997numerical}), must be employed. 
This raises important questions regarding stability, efficiency, and the design of suitable preconditioners.

The goal of this work is to develop and analyze a \textit{spectral split-step Pad\'e} %SSP 
(SSSP) method with a PML to efficiently solve mode parabolic equations. 
This approach combines a Fourier spectral discretization in the transverse direction and a Pad\'e approximation of the propagator to achieve wide-angle capabilities. 
It also uses a PML formulation to treat unbounded domains within a finite computational region. 
Particular focus is placed on efficiently realizing the method numerically, including using Krylov subspace methods and constructing effective preconditioners based on operator splitting ideas.

From a mathematical perspective, recent advances have provided a rigorous analytical foundation for these approaches. 
In particular, the square-root operator appearing in parabolic wave equations has been studied within the framework of pseudodifferential operators, and the well-posedness of the associated evolution equations has been established in \cite{ehrhardt2025square}. 
These results justify the use of exponential propagators and their rational approximations within the SSP framework.

We establish the stability and convergence properties of the proposed scheme. 
Specifically, we analyze the dissipativity of the PML-modified operator, prove the stability of the Pad\'e-based marching scheme, and derive a global error estimate combining contributions from the temporal discretization, spectral approximation in space, and PML layer truncation.
These results rigorously justify the method and clarify the interplay between the different approximation components.
We demonstrate the performance of the proposed method on benchmark problems from underwater acoustics, 
including flat-bottom and wedge configurations. 
We also apply the method to radio wave propagation in the atmosphere. 
The numerical results demonstrate that the method achieves high accuracy, even on relatively coarse grids.
This highlights the effectiveness of the spectral discretization and the robustness of the PML formulation.

%%%% Structure of Daniel2 paper
The remainder of the paper is organized as follows. 
Sections~\ref{sec:SSP_classical} and \ref{sec:sssp} introduce the mode parabolic equation and the spectral split-step Pad\'e method in more detail. 
Section~\ref{sec:3} presents the PML formulation and its incorporation into the spectral framework. 
Section~\ref{sec:3} is devoted to the analytical properties of the method, including stability and convergence. 
Section~\ref{sec:radio} describes the application to radiowave propagation in the troposphere,
and Section~\ref{sec:sw_acoust} deals with sound propagation in a 3D shallow-water waveguide.
Finally, Section~\ref{sec:5} provides concluding remarks.

%%%%%%%%%%%%%%%%%%%%%%%%%%%%%%%%%%%%%%%%%%%%%%%%%%%%

%%%%%%%%%%%%%%%%%%%%%%%%%%%%%%%%%%%%
\section{The Split-Step Pad\'e method}\label{sec:SSP_classical}
The \textit{split-step Pad\'e} (SSP) method was proposed to numerically integrate a one-way counterpart of Helmholtz-type elliptic equations. 
For the sake of clarity, we will write this equation as
\begin{equation}\label{HEgen}
    \partial_x^2 U + \partial_y^2 U + k^2U = 0\,,
\end{equation}
where $U = U(x,y)$ is the wavefield, and $k = k(x,y)$ is the medium wave number. 
The physical meaning of $U$ and $k$ depends on the area of application, e.g., it can be transverse component of the electric field in a radiowave propagating in the troposphere or the amplitude of a vertical mode in the normal mode decomposition of the acoustic field (these two examples are considered in Section~\ref{sec:radio} and Section~\ref{sec:sw_acoust}). 

The following one-way counterpart of \eqref{HEgen} can be obtained by a formal factorization of the Helmholtz operator and by retaining only the waves propagating in the positive direction of the $x$ axis:
\begin{equation}\label{eq:square_root}
    \partial_x U (x,y) = \iu\sqrt{\partial_y^2+k^2(x,y)} \,U(x, y) \,,\quad x,y\; \in  [0,R]\times(-\infty,\infty)\,.
\end{equation}
A mathematically rigorous definition of the square root of an operator for the case when its numerical range is bounded away from the real axis from above (this is always the case in real-world problems, as $k$ has a positive imaginary part due to the medium attenuation) is given in \cite{ehrhardt2025square} together with the existence and uniqueness proof for the initial-value problem for \eqref{eq:square_root} in the half-space $x\ge0$.

There are several reasons why one-way approximations Helmholtz-type equations are generally more practical than the latter. 
In particular, and important advantage from the viewpoint of scientific computing consists in the fact that a boundary-value problem for an elliptic equation is transformed into a a Cauchy problem for an evolutionary equation that can be integrated numerically by a marching scheme.

Another advantage is that the rapidly oscillating term $\mathrm{e}^{\iu k_0 x}$ (the principal oscillation) in the propagation direction $x$ defined by the reference wave number $k_0$ can be canceled out by introducing a new unknown function
$u(x,y) = \mathrm{e}^{-\iu k_0x}U(x,y)$. 
Introducing the operator $\mathcal{X} = (\partial_y^2 + k_0^2 - k^2)/k_0^2$, we can write an equation for $u$ as
\begin{equation}
    \partial_x u(x,y) = \iu k_0 \bigl( \sqrt{1 + \cX} - 1 \bigr)u\,.
\end{equation}
The solution of the latter equation can be computed by a marching scheme obtained by formally integrating it along a small interval of length $h$ in $x$ direction, i.e.,
\begin{equation}\label{eq:propagation}
    u(x+h, y) = \exp\Bigl(\iu k_0 h \bigl(\sqrt{1 + \cX } - 1 \bigr)\Bigr) u(x, y) = f(\cX)u(x, y)\,,
\end{equation}
where $f(z) = \mathrm{e}^{\iu t (\sqrt{1+z}-1)}$ is defined as a scalar function of a complex variable, and $t = k_0h$. 
The exponential on the right-hand side of \eqref{eq:propagation} is called \textit{a propagator}.

The \textit{Split-Step Pad\'e (SSP) method} consists in the approximation of the pseudodifferential operator on the right-hand side of \eqref{eq:propagation} by the $[P|P]$ Pad\'e approximant $R_P$ of the form
\begin{equation}\label{eq:pade}
    u(x+h, y) \approx R_P\bigl(\cX\bigr) u(x,y)
    = \biggl( d_0 + \sum_{j=1}^{P} \frac{d_j}{1 + b_j \cX} \biggr) u(x,y) 
    = \biggl( d_0 u + \sum_{j=1}^P d_j w_j \biggr)\,,
\end{equation}
where the auxiliary functions $w_j$ are calculated at each marching step in $x$ by solving the following equations
\begin{equation}\label{wj}
    (1 + b_j \cX) w_j(y)=u(x,y)\,,\quad y\in  (-\infty,\infty),\quad j=1,2,\dots,P.
\end{equation}
Over the years, numerous distinct varieties and extensions of the SSP framework have been proposed in order to adapt to more demanding physical scenarios.
Examples include the high-order pseudo-differential developments by Antoine et al.\ \cite{antoine2020pseudospectral}, 
the multi-dimensional and electromagnetic wide-angle adaptations by Lu \cite{LU1999231}, 
the energy-conserving and reciprocity-preserving one-way wave formulations of Godin \cite{godin1999reciprocity},
and the vector/coupled-mode extensions in curvilinear domains by Petrov et al.\ \cite{petrov024generalization}.

% Split Step spectral (but not Pade):  \cite{clark2009wide}

%\matthias{For an implementation of the Pad\'e approximation (using the Chebyshev–Pad\'e algorithm) we refer to \cite[Section~9.1.]{dedner2001transparent}.}
%%% https://de.mathworks.com/matlabcentral/fileexchange/5234-chebyshev-pade-approximation
%\matthias{or do we use the classical "matrix-method" to compute the coefficients ?}
%\daniel{Currently we use the matrix-method to compute the coefficients. RAM somehow uses 'stability terms' in addition to the Pade order.}
%\matthias{It is well-known that the Pad\'e approximation has numerical convergence issues. This is well described  in \cite[Section~III]{Dronamraju2021}. ME: i can provide here more text. The theory is related to the "Baker-Gammel-Wills conjecture" which is for diagonal, i.e.\ $[P|P]$ Pad\'e approximants.}
%
% MATLAB code \texttt{padeapprox} for robust Pad\'e approximation: this approach using Singular Value Decomposition (SVD) to construct Padé approximants, which eliminates spurious pole-zero pairs (Froissart doublets) and handles nearly singular systems better than standard methods, see \cite{gonnet2013robust}.

%%%%%%%%%%%%%%%%%%%%%%%%%%%%%
\begin{remark}[Computation of Pad\'e coefficients]
\label{rem:pade_coeff}
In this work, the coefficients of the Pad\'e approximant are computed using the classical matrix method. 
This method involves solving the linear system associated with matching the Taylor coefficients of the propagator. 
While this approach is straightforward and widely used, it is well known that computing Pad\'e approximants 
can lead to numerical ill-conditioning, particularly for higher approximation orders. 
Convergence and stability issues are discussed in detail in \cite[Section~III]{Dronamraju2021}.

Several more robust alternatives have been proposed in the literature. 
In particular, Chebyshev-Pad\'e techniques combine rational approximation with Chebyshev expansions to provide improved numerical behavior \cite[Section~9.1.]{dedner2001transparent}.
Furthermore, singular value decomposition (SVD)-based approaches can substantially improve robustness by mitigating the effects of nearly singular systems and eliminating spurious pole-zero pairs (Froissart doublets), see \cite{gonnet2013robust}.

From a theoretical perspective, the convergence behavior of Pad\'e approximants is closely related to fundamental questions in rational approximation theory.
One of the most prominent examples is the Baker-Gammel-Wills conjecture, which concerns the convergence of diagonal Pad\'e approximants to analytic functions. 
Although the conjecture is known to be false in full generality, it has motivated extensive research on the localization of poles and the convergence properties of Pad\'e sequences. 
Since the % split-step Pad\'e 
SSP method relies on comparatively low approximation orders, the classical matrix method was sufficient for all numerical experiments reported in this paper.

In particular, numerical instabilities may manifest as spurious poles, the location of which can significantly impact the stability of the resulting rational propagator, cf.\ Section~\ref{subsec:poles} and in particular Conjecture~\ref{conj1}.
\end{remark}
% The  MATLAB code padeapprox for robust Pad\´e approximation, based on Algorithm 2. The input function can be either a vector of Taylor coefficients or a function handle. 
%The code is freely available as part of Chebfun, see L. N. Trefethen et al., Chebfun software package, https://www.chebfun.org/
% https://github.com/chebfun/chebfun/blob/master/padeapprox.m

The above formulation provides an efficient forward propagation scheme. 
However, in this basic form, the treatment of unbounded domains remains an open issue.
In Subsection~\ref{sec:PML}, we introduce a \textit{perfectly matched layer} (PML) to avoid artificial reflections at the boundaries. 
This work follows the notation of Antoine et al.\  \cite{antoine2020pseudospectral}. 
%A PML for the (finite difference) parabolic wave equation has been published by Levy in \cite{levy2001perfectly} for electromagnetic waves.

Note that the proper definition of the square root $\sqrt{\partial_y^2 + k^2}$, let alone the mathematical properties of \eqref{eq:square_root}, is non-trivial.
A rigorous definition of such operators and existence, uniqueness, and well-posedness for equations of the form \eqref{eq:square_root} were established in a recent paper \cite{ehrhardt2025square}.

%%%%%%%%%%%%%%%%%%%%%%%%%
\subsection{Perfectly Matched Layer}\label{sec:PML}
In wave propagation, a common computational problem is that the wave field on an unbounded, open domain $\Omega_\mathrm{open}$ has to satisfy the Sommerfeld radiation boundary condition \cite{sommerfeld1912greensche}.
% When numerically solving those problems on unbounded domains, those domains cannot be easily truncated without introducing boundaries, which interact with the wave field, producing numerically correct, but physically meaningless solutions.
% Over the years, a number of techniques have been developed to solve this problem.

One of the more recent and computationally simplest methods is the introduction of a \textit{perfectly matched layer} (PML), developed by Ber\'enger \cite{berenger1994perfectly} and applied to a parabolic wave equation by Levy \cite{levy2001perfectly}. 
The eponymous PML is a buffer domain $\Omega_\mathrm{PML}$ around the truncated (physical) domain $\Omega_\mathrm{Phys}$, in which the physical coordinates are stretched and the field is dampened.
In Cartesian coordinates for the simple case of a one-dimensional, symmetric domain of interest, this involves truncating the open domain $\Omega_{\rm open} \supset \Omega_{\rm Phys} = [-L_y^*, L_y^*]$ and introducing a layer $\Omega_{\rm PML} = [-L_y, -L_y^*] \cup [L_y^*, L_y]$ around the physical domain (the domain of interest).
The problem is then numerically solved on the computational domain $\Omega_\mathrm{comp} = \overline{\Omega_\mathrm{phys}} \cup \Omega_\mathrm{PML}$.
To ensure that boundary interactions do not propagate into the physical domain, the coordinate is stretched in the PML
\begin{equation*}
    \tilde{y} = y + \mathrm{e}^{\iu\vartheta_y} \int_{L_y^*}^{y} \sigma(s) \,ds\,, \quad \sigma(s)=0 \quad \text{if} \; s \in \Omega_{\rm Phys}\,.
\end{equation*}
Here $\vartheta_y \in (0, \frac{\pi}{2})$ is a real constant and $\sigma(s)$ is the real-valued absorbing function.
This coordinate transform applied to the operator $\cD = \cD(x,y)$ yields the % new
PML operator
\begin{equation*}
        \cD_\mathrm{PML} = k^2 + \frac{1}{S_y}\,\partial_y\Bigl(  \frac{1}{S_y}\partial_y \Bigr) \,
\end{equation*}
from the chain rule, where
\begin{equation}\label{eq:Sy}
    S_y = S_y(y) =
    \begin{cases}
        1\,, &|y| < L_y^* \, , \\
        1 + \mathrm{e}^{\iu \vartheta_y} \sigma(|y| - L_y)\,, & L_y^* \le |y| < L_y \,.
    \end{cases}
\end{equation}
Note here, that the operator remains unmodified inside the physical domain, is only changed by the introduction of dampening in the PML.
In case of the time step of SSP, one can now write the operator $\cX$ as
\begin{equation}\label{XPML}
    \cX_{\rm PML} = \frac{k^2 - k_0^2 + S_y^{-1}\partial_y (S_y^{-1} \partial_y)}{k_0^2}\,.
\end{equation}
The use of a classical SSP method, i.e., solving the subproblems~\eqref{wj} using finite differences, has already been discussed in the context of radio physics \cite{levy2001perfectly}.
% Extending the wave number $k^2(x,y)$ onto the new computational domain is easily done by constant extrapolation
% \begin{equation}
%     k(x,y) = \begin{cases}
%         k(x,y)\,, & y\in[-L^*, L^*]\,, \\
%         k(x,L^*)\,, & y \in (L^*, L]\,, \\
%         k(x,-L^*)\,, & y \in [-L, -L^*)\,.
%     \end{cases}
% \end{equation}
The PML formulation modifies the differential operator while preserving the structure of the problem as a whole. 
Next, we will discuss how to incorporate this modified operator into the spectral SSP framework.

%%%%%%%%%%%%%%%%%%%%%%%%%%%%%
\section{Spectral SSP Method}\label{sec:sssp}
The main goal of this work is to develop a spectral version of the SSP marching algorithm, 
as described by Equations~\eqref{eq:pade} and \eqref{wj} in Section~\ref{sec:SSP_classical}, which is compatible with PML.
The complication in this case consists in the solution of the equations
\begin{equation}\label{wjPML}
    (1 + b_j \cX_{\rm PML}) w_j(y)= u(x,y)\,, y\in  [-L,L],
\end{equation}
with periodic boundary conditions imposed at $y=\pm L$ (replacing Equation~\eqref{wj} on the infinite interval). 

Observe that the differential part of the operator $\cX_{\rm PML}$ in \eqref{XPML} can be computed in the Fourier domain separately from the algebraic part by the formula
\begin{equation*}
\begin{split}
    S_y^{-1} \partial_y \bigl( S_y^{-1} \partial_y u \bigr)
        % &= \int_{-\infty}^{\infty} S_y^{-1} \partial_y \bigl( S_y^{-1} \partial_y \tilde{u} \bigr)\,d\eta \\
        % &= \int_{-\infty}^{\infty} S_y^{-1} \partial_y \bigl( S_y^{-1} \iu\eta \tilde{u} \bigr)\,d\eta \\
        &= \mathcal{F}^{-1}\bigl\{ |S_y|^{-2} |\eta|^2 \tilde{u}
            - \iu \eta S_y^{-1} \partial_y(S_y^{-1}) \tilde{u} \bigr\}\,,
\end{split}
\end{equation*}
where $\eta$ denotes the dual variable to $y$ and $\tilde{u}=\tilde{u}(\eta) = \mathcal{F}\{u\}$ is the Fourier-transformed function $u$. 
% \pavel{I think we need to reduce previous eq to
% \begin{equation}
%     S_y^{-1} \partial_y \bigl( S_y^{-1} \partial_y u \bigr)
%         = \mathcal{F}^{-1}\bigl\{ |S_y|^{-2} |\eta|^2 \tilde{u}
%             - \iu \eta S_y^{-1} \partial_y(S_y^{-1}) \tilde{u} \bigr\}\,,
% \end{equation}
% }
% \textcolor{red}{This operator is not diagonalizable by a pure sine- or cosine transform, due to the multiplication by $\iu\xi_y$ not being representable in pure sine or cosine space. }
% \pavel{Should we remove this comment?}
%
% \textcolor{red}{Homogeneous Dirichlet boundary conditions therefore have to be dealt with by extending the domain by a mirror domain, as described by Smith and Tappert .}
% \pavel{This one goes to radio physics section}
%
The solution of \eqref{wjPML} therefore becomes a trivial task in the Fourier domain if $\delta k^2(y) = k^2(y) - k_0^2 = 0$ and $S_y \equiv 1$.

Recently it was shown \cite{Daniel1} that for the case of non-homogeneous medium $\delta k^2(y)\neq0$ (albeit without the PML) a spectral counterpart of the SSP method can be constructed using a truncated Neumann series expansion of the functions $w_j(y)$
\begin{equation*}
\begin{aligned}
     w_j &= \sum_{m=0}^{\infty}  \bigl(- \tilde{b}_j(1+b_j\partial_y^2)^{-1}\delta k^2 \bigr)^m (1+ b_j\partial_y^2)^{-1}u \\
    &\approx \sum_{m=0}^{M}  \bigl(- \tilde{b}_j(1 +b_j\partial_y^2)^{-1}\delta k^2 \bigr)^m (1 + b_j\partial_y^2)^{-1}u\,,
\end{aligned}
\end{equation*}
which can be easily computed using a Fourier-domain representation of $(1 + b_j\partial_y^2)^{-1}$. 
However, this technique cannot easily be generalized to the case of a PML since there does not seem to be a computationally efficient way to evaluate the inverse of
 $(1 + b_jS_y^{-1}\partial_y (S_y^{-1} \partial_y))$.

The second possible option is to use a Krylov subspace method to solve \eqref{wj} for the weights of the Pad\'e series.
Consider the PML subproblems
\begin{equation} \label{eq:subproblem_pml}
    \bigl(\mathbb{I} + \tilde{b}_j \delta k^2(y) + \tilde{b}_j \cD \bigr) w_j(y) = u(y)\,,
\end{equation}
in which the differential operator $\cD $ is defined by
\begin{equation*}
    \cD u = \frac{1}{S_y(y)} \partial_y \Bigl(\frac{1}{S_y(y)} \partial_y \Bigr) \,,
\end{equation*}
and $\mathbb{I}$ refers to the identity operator.
On the domain $y\in (-L_y, L_y)$, the differential operator $\partial_y u$ can be evaluated in Fourier space
% \pavel{Here we need some smooth transition from $\eta=\eta_p$ to $\pi p/Ly$! Btw, Daniel, can we perhaps be even more consistent here and use subscript $p$ for indexing Pad\'e terms, while reserving $j$ for FFT discretization?} 
% \daniel{Using $j$ for all spectral decompositions is a great call. We already introduced $y \in (-L_y, L_y)$, which automatically makes our spectral representation a Fourier series. I therefore think this is explicit enough.}
% \begin{equation*}
%     \partial_y u(y) = 
%     \sum_{j=0}^{\infty} \frac{\iu \pi j}{L_y} \,\tilde{u}_j \,\mathrm{e}^{\frac{\iu \pi j}{L_y}y} \,.
% \end{equation*}
\begin{equation*}
    \partial_y u(y) = 
    \sum_{j=-\infty}^{\infty} \iu \eta_j \,\tilde{u}_j \,\mathrm{e}^{\iu \eta_j y} \,,
\end{equation*}
with the discrete Fourier wave numbers $\eta_j = \frac{\pi j}{L_y}$.
Let for this purpose $\tilde{u}_j = \int_{-L_y}^{L_y} u(y) \mathrm{e}^{\frac{-\iu\pi j}{L_y}y} \,dy$ be $j$th Fourier coefficient of $u$. 
%\pavel{please define $\widehat{u}_p$! Also i thing hat is reserved for operators, and for consistency i would use tilde for FT}
The perturbation of the wave number due to medium effects $\delta k^2 = k_0^2 - k^2$ is treated as a multiplication operator.
The $p$th coefficient of the Pad\'e expansion, divided by the squared reference wave number, is referred to as $\tilde{b}_p = b_p/k_0^2$.
Numerically, the Fourier Series is truncated, introducing a spectral discretization
\begin{equation*}
    (\partial_y u)_n = \sum_{j=-N/2}^{N/2-1} \iu \eta_j \,\tilde{u}_j \,\mathrm{e}^{\iu \eta_j y_n} \,.
\end{equation*}
The problem \eqref{eq:subproblem_pml} can then be solved iteratively using a Krylov subspace method with a suitable initial guess. 
Suitable options are GMRES \cite{saad1986GMRES} and Bi-CGSTAB \cite{vanderVorst1992BiCGSTAB}. 
In practice, Bi-CGSTAB is easier to use, because it does not require tuning of a restart period for optimal performance and runs slightly faster.
\section{Analytical Properties of the PML--SSP Method}\label{sec:3}
In this section, we establish the stability and convergence properties of the \textit{spectral split-step Pad\'e} (SSSP) method combined with a PML. 
The analysis is performed in a periodic setting that is consistent with the Fourier discretization employed in the numerical scheme. 
More precisely, we will prove that Pad\'e approximant $R_P(\cX)$ in \eqref{eq:pade} acting on a Sobolev space $H^2_{\mathrm{per}}(-L,L)$ of periodic functions defined on the interval $y \in [-L,L]$ has spectral radius $\rho(R_P(\cX))\le1$. 
On the discrete level this implies the decreasing of the norm at each step of the marching scheme. 

For the sake of simplicity let us assume that the reference wave number is purely real, and that the medium is homogeneous, i.e., $k(x,y) = k_0= \text{const}$. 
Clearly, this simplification does not substantially affect the properties of the \textit{PML-modified operator} $\cX_{\mathrm{PML}}$. 
In our case the latter reduces to  
\begin{equation*}
     \cX_{\mathrm{PML}} = \cD u = \frac{1}{k_0^2}\frac{1}{S_y(y)} \frac{d}{dy}\Bigl( \frac{1}{S_y(y)} \frac{d u}{dy} \Bigr)\,,
\end{equation*}
where the stretching function $S_y(y)$ satisfies
\eqref{eq:Sy} and $\Real S_y(y) \ge1$, $\Imag S_y(y) \ge0$ in $\Omega$.
The remainder of this section is dedicated to the analysis of the properties of the operator $R_P(\cD)$.

%%%%%%%%%%%%%%%%%%%%%%%%%%%%%%
\subsection{The poles of the Pad\'e approximant are located on the lower half-plane}\label{subsec:poles}
In this section we study the properties of the Pad\'e approximant of the function $f(z)$ defined in \eqref{eq:propagation}. 
The principal result that we require for investigation of the stability of the marching scheme is that all the poles of the denominator of its $[P/P]$-Pad\'e approximation are located below the real axis. 

We start with two preparatory results.
%%%% Lemma 3.1. 
\begin{lemma}\label{lem:31} 
The coefficients $c_k$ of the power series expansion $f(z) = \sum_{k=0}^{\infty}c_k z^k$ of the propagator function satisfy the following identity
    \begin{equation}\label{Tay_coeffs}
        \sum_{\substack{k+\ell = m,\\
        k\ge0,\, \ell\ge0}}
        c_k \,\bar{c}_{\ell} = \delta_{0m}\,,
    \end{equation}
where $\delta_{ij}$ denotes the usual Kronecker delta, and the overbar denotes complex conjugation.
\end{lemma}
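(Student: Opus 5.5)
The identity is the coefficient form of the relation $f(z)\,\overline{f(\bar z)}=1$. The plan is to prove that relation analytically and then read off Taylor coefficients.

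\emph{Step 1: the relation.} Fix $t=k_0h$ real (the reference wave number is real). Take the principal branch of $\sqrt{1+z}$, which is analytic on the disk $|z|<1$ and equals $1$ at $z=0$. Put
\[
g(z)=\overline{f(\bar z)}=\sum_{\ell\ge0}\bar c_\ell z^\ell .
\]
This is analytic in the same disk and has conjugated coefficients. For real $z\in(-1,1)$, the quantity $\sqrt{1+z}-1$ is real, so $|f(z)|=|\mathrm{e}^{\iu t(\sqrt{1+z}-1)}|=1$. On this interval, therefore,
\[
f(z)\,g(z)=f(z)\,\overline{f(z)}=|f(z)|^2=1 .
\]

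\emph{Step 2: extend to the disk.} The function $f(z)g(z)-1$ is analytic on the disk and vanishes on the real segment $(-1,1)$, which has accumulation points. By the identity theorem, $f(z)g(z)\equiv1$ for $|z|<1$.

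A more direct route is also available. The Schwarz reflection of the principal square root satisfies $\overline{\sqrt{1+\bar z}}=\sqrt{1+z}$. Hence
\[
g(z)=\mathrm{e}^{-\iu t(\sqrt{1+z}-1)}=1/f(z)
\]
explicitly. Either argument gives the same conclusion.

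\emph{Step 3: compare coefficients.} Multiply the two power series by the Cauchy product:
\[
f(z)g(z)=\sum_{m\ge0}\Bigl(\sum_{k+\ell=m}c_k\bar c_\ell\Bigr)z^m .
\]
Since this equals $1$, the coefficient of $z^m$ is $\delta_{0m}$. This is exactly \eqref{Tay_coeffs}.

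\emph{Main obstacle.} The only delicate points are the branch and the radius of convergence. The expansion of $f$ is centered at $0$, and the branch point $z=-1$ limits it to $|z|<1$. The identity must therefore be established on that disk, where the Cauchy product converges absolutely. The conjugation symmetry relies on $t$ being real; I will state this assumption explicitly, in line with the standing hypothesis of Section~\ref{sec:3}.
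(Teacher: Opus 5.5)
Your proposal is correct and follows the paper's proof: you establish $f(z)\overline{f(\bar z)}\equiv 1$ by the identity theorem from its validity on the real axis, then compare Cauchy-product coefficients. The differences are cosmetic and both are improvements: you work on the disk $|z|<1$, where the series actually converges, rather than on $\mathbb{C}\setminus(-\infty,-1]$, and you state explicitly that $t$ must be real, which the paper uses only tacitly.
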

%%%%%
\begin{proof}
    Let us note that the function $f(z)$ is holomorphic on $\Omega = \mathbb{C}\setminus (-\infty, -1]$,
    and so is the function $\overline{f(\bar{z})}$.
    For $z\in \mathbb{R}\cap \Omega$ we also have 
    \begin{equation*} 
        F(z) \equiv f(z) \overline{f(\bar{z})} - 1 = 0\,. 
    \end{equation*}
    Since the function $F(z)$ defined by the latter equality is also holomorphic on $\Omega$ and identically zero on $\mathbb{R}$, it also vanishes for all $z\in \Omega$, hence $f(z)\overline{f(\bar{z})} \equiv 1$ on this domain.
    
    Let us now expand the functions on the left-hand side of the last equality into power series at $z=0$
    \begin{equation*}
    \biggl(\sum_{k=0}^{\infty} c_k z^k\biggr)
    \overline{\biggl(\sum_{k=0}^{\infty} c_k \bar{z}^{k} \biggr)} = 1\,. 
    \end{equation*}
    Multiplying and combining the terms with identical powers of $z$, we arrive at the formula
    \begin{equation*} 
    \sum_{m=0}^{\infty}\biggl(\sum_{k+\ell =m} c_k \,
    \bar{c}_{\ell} \biggr)z^m = 1\,, 
    \end{equation*}
    from which the condition in the Lemma statement follows.
\end{proof}

Consider now the $[P/P]$-Pad\'e approximant $R_P(z) = T_P(z)/Q_P(z)$ (with $\deg T_P = P$, $\deg Q_P = P$), 
computed from the first $2P+1$ coefficients $c_j$ of the Taylor series of $f(z)$ at $z=0$. 
Note that the coefficients of $T_P(z)$, $Q_P(z)$ are found from the linear system obtained by comparing the terms of the powers $z^0,\dots,z^{2P}$ in the asymptotic equality
$Q(z)f(z)-T(z)=O(z^{2P+1})$. 
Its solution is unique up to a common scalar factor whenever the Hankel determinant
$\det(c_{P+i-j})_{i,j=1}^{P}$ is nonzero, cf.\ \cite{Baker1996}.

%%%%%%%%%%%%%%%%%%%%%%% Lemma
\begin{lemma}[Conjugate symmetry]\label{lem:symm}
The numerator and the denominator of the Pad\'e approximant $R_P(z)$ are related by the following formula
\begin{equation*}
  Q_P(z) = \overline{T_P(\bar{z})}, \qquad z\in \Omega.
\end{equation*}
In particular, this implies $|R_P(z)|=1$ for  $z\in\mathbb{R}\cap \Omega$.
\end{lemma}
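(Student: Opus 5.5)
The approach is to exploit the identity $f(z)\overline{f(\bar z)}\equiv 1$ from the proof of Lemma~\ref{lem:31}, together with uniqueness of the $[P/P]$ Pad\'e approximant. Write $T^*(z)=\overline{T_P(\bar z)}$ and $Q^*(z)=\overline{Q_P(\bar z)}$; these are polynomials of degree at most $P$ whose coefficients are the complex conjugates of those of $T_P$ and $Q_P$. By definition, $Q_P(z)f(z)-T_P(z)=O(z^{2P+1})$. Replacing $z$ by $\bar z$ and conjugating gives
\begin{equation*}
Q^*(z)\,\overline{f(\bar z)}-T^*(z)=O(z^{2P+1}).
\end{equation*}
Lemma~\ref{lem:31} says that $\overline{f(\bar z)}=1/f(z)$ as formal power series; indeed \eqref{Tay_coeffs} is exactly this statement at the level of coefficients. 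Multiplying the last relation by $f(z)$, whose expansion starts with $c_0=1$, we obtain
\begin{equation*}
T^*(z)\,f(z)-Q^*(z)=O(z^{2P+1}).
\end{equation*}
So the pair $(T^*,Q^*)$ solves the same linearized Pad\'e system as $(Q_P,T_P)$, with the roles of numerator and denominator exchanged.

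Next comes uniqueness. Under the standing assumption that the Hankel determinant $\det(c_{P+i-j})$ is nonzero, the solution of the linear system is unique up to a common scalar. Hence there is a constant $\lambda$ with $T^*=\lambda Q_P$ and $Q^*=\lambda T_P$. Substituting the second relation into the conjugated first one gives $T_P=\bar\lambda^{-1}\cdot$(conjugate relation), and consistency forces $|\lambda|=1$. Concretely, conjugating $Q^*=\lambda T_P$ yields $Q_P=\bar\lambda T^*$, and then $T^*=\lambda\bar\lambda T^*$.

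To fix the normalization, evaluate at $z=0$. We have $f(0)=1$ and the usual normalization $Q_P(0)=1$, so $T_P(0)=Q_P(0)=1$ from the $z^0$ equation. Then $\lambda=T^*(0)/Q_P(0)=\overline{T_P(0)}=1$. This gives $Q_P(z)=\overline{T_P(\bar z)}$.

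For the consequence, take $z$ real in $\Omega$ with $Q_P(z)\neq0$. Then $Q_P(z)=\overline{T_P(z)}$, so $|R_P(z)|=|T_P(z)|/|\overline{T_P(z)}|=1$.

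The main obstacle is the uniqueness step. The Hankel nondegeneracy has to be taken as an assumption, as the text preceding the lemma implicitly does. If the system is degenerate, one can still argue through uniqueness of the reduced rational function $R_P$: the identity above shows that $1/\overline{R_P(\bar z)}$ is also a $[P/P]$ approximant, hence equal to $R_P$. This gives the $|R_P|=1$ claim, and the polynomial identity then follows after reducing to lowest terms with the normalization at $0$.
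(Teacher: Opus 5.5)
Your proof is correct and follows the same route as the paper. Like the paper, you conjugate the linearized Pad\'e relation, use $\overline{f(\bar z)}=1/f(z)$ from Lemma~\ref{lem:31} to swap the roles of numerator and denominator, invoke uniqueness up to a scalar (under the Hankel nondegeneracy that the paper also tacitly assumes), and fix $\lambda=1$ via the normalization at $z=0$; you additionally make explicit the step $T_P(0)=Q_P(0)c_0=1$, which the paper only asserts, while your $|\lambda|=1$ computation and degenerate-case remark are not needed for the main argument.
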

\begin{proof}
Consider the polynomials $\tilde{T}(z) := \overline{Q_P(\bar{z})}$ and
$\tilde{Q}(z) := \overline{T_P(\bar{z})}$.
Taking the complex conjugate of
the defining relation $Q_P f - T_P = O(z^{2P+1})$ and 
using the equality $\overline{f(\bar{z})}=1/f(z)$ from the proof of Lemma~\ref{lem:31}, we obtain $\tilde{Q} f - \tilde{T} = O(z^{2P+1})$. 

Since $[P/P]$-Pad\'e coefficients are unique up to a constant scalar factor, it can be concluded that
\begin{equation*}
  \tilde{T} = \lambda T_P  \text{ and } \tilde{Q} = \lambda Q_P
\end{equation*} 
for some $\lambda\in\mathbb{C}$. 
Now for $z\in \mathbb{R}$ we have
\begin{equation*} 
   |R_P(z)|^2 = \Bigl| \frac{T_P(z)}{Q_P(z)} \Bigr|^2 = \frac{T_P(z) \overline{T_P(z)}}{Q_P \overline{Q_P(z)}}= \frac{T_P(z) \overline{T_P(\bar{z})}}{Q_P \overline{Q_P(\bar{z})}} = \frac{T_P \tilde{Q}}{Q_P
   \tilde{P}} = 1\,.
\end{equation*}
Finally, normalizing $Q_P(0)=1$ forces $\lambda=1$. Hence
$Q_P(z)=\overline{T_P(\bar{z})}$. For $y\in\mathbb{R}$ this gives $|R_P(z)| = |T_P(z)|/|Q_P(z)| = 1$.
\end{proof}

%%%%%%%%%%%%%%%%%%%%%%%%%%%%%% Conjecture
\begin{conjecture}\label{conj1}
    For all values of $t$ and $P$, all zeros $z_j$ of $Q_P$ are located below the real axis, i.e., $Q_P(z_j)=0$ implies $\Imag(z_j)<0$.
\end{conjecture}

This conjecture can be easily proven analytically for $P\le4$ by computing the coefficients dependent on $t$ using the matrix method (using symbolic algebra systems).
For higher Pad\'e orders, direct computations for a typical range of values $t\in(0.4,200)$ provide numerical evidence that the roots of $Q_P$ lie below the real axis. 
The results for $P=5$ and $P=9$ are displayed in Figure~\ref{fig:pade_roots}. 
We note that due to the inherent instability of Taylor-to-Pad\'e computations, higher orders $P$ may exhibit roots with slightly positive imaginary part, as described in Remark~\ref{rem:pade_coeff}.

\begin{figure}[htb]
    \centering
    \includegraphics[width=\linewidth]{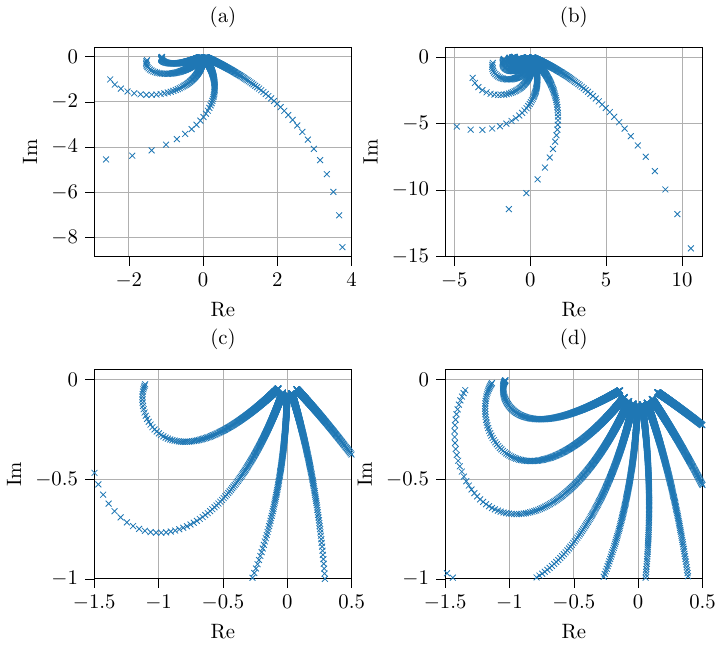}
    \caption{500 numerically computed poles of the Pad\'e expansions of \eqref{eq:propagation}  
    for both $P=5$ ((a) and (c)) and $P=9$ ((b) and (d)) provide justification for Conjecture~\ref{conj1}.
    The poles were computed for $t\in[0.4,0.8, \ldots, 200]$ via the matrix-method.
    The lower row ((c) and (d)) provides a zoomed-in view for clarity.
        % Depending on the stability of the computation of the Pad\'e coefficients, roots with positive imaginary parts may appear for large $P$.
        }
    \label{fig:pade_roots}
\end{figure}

%%%%%%%%%%%%%%%%%%%%%%%%%%%%%% Proposition
\begin{proposition}\label{Prof_RP_ineq}
From Conjecture~\ref{conj1} it follows that $|R_P(z)|\le1$ for all $z$ such that $\Imag(z)\ge0$ with the equality achieved exclusively for $z\in \mathbb{R}$. 
\end{proposition}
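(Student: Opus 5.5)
The plan is to use Lemma~\ref{lem:symm}, which gives $Q_P(z)=\overline{T_P(\bar z)}$, and hence $T_P(z)=\overline{Q_P(\bar z)}$. Factor the denominator as $Q_P(z)=C\prod_{j=1}^{P}(z-z_j)$, where by Conjecture~\ref{conj1} every zero satisfies $\Imag z_j<0$. Then
\begin{equation*}
T_P(z)=\overline{Q_P(\bar z)}=\bar C\prod_{j=1}^{P}(z-\bar z_j),
\end{equation*}
so the zeros of the numerator are the reflections $\bar z_j$ of the poles, all lying in the open upper half-plane. Consequently
\begin{equation*}
|R_P(z)|=\prod_{j=1}^{P}\frac{|z-\bar z_j|}{|z-z_j|},
\end{equation*}
using $|C|=|\bar C|$. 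If $\deg Q_P<P$, the factors are simply fewer, and the same symmetry still pairs each zero with its conjugate.

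The key step is then an elementary geometric fact. For fixed $w=z_j$ with $\Imag w<0$, the quantity $|z-\bar w|^2-|z-w|^2$ equals $4\,\Imag z\,\Imag w$ up to sign. A direct computation gives
\begin{equation*}
|z-\bar w|^2-|z-w|^2=-4\,\Imag z\,\Imag(\bar w)\cdot(-1)\cdots,
\end{equation*}
and cleanly, writing $z=a+\iu b$ and $w=c+\iu d$,
\begin{equation*}
|z-\bar w|^2-|z-w|^2=(b+d)^2-(b-d)^2=4bd.
\end{equation*}
Since $d<0$, this is $\le 0$ whenever $b=\Imag z\ge0$, with equality iff $b=0$. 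Each factor is therefore $\le1$ in the closed upper half-plane, and strictly $<1$ when $\Imag z>0$. Taking the product gives $|R_P(z)|\le1$, with equality exactly on $\mathbb{R}$ (there every factor equals $1$, consistent with Lemma~\ref{lem:symm}). In words, each factor is a Blaschke-type factor for the lower half-plane, and the product is a finite Blaschke product in disguise.

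The main obstacle is bookkeeping rather than analysis. First, Lemma~\ref{lem:symm} is stated on $\Omega$, but the identity is between polynomials, so it extends to all of $\mathbb{C}$. Second, in the strict-inequality claim at least one pole must be present. If $Q_P$ were constant, then $R_P$ would be a unimodular constant, and equality would hold everywhere. I would therefore note that for $t\neq0$, $f$ is not constant, so $P\ge1$ forces a nonconstant denominator in the reduced approximant; otherwise the matching conditions together with Lemma~\ref{lem:31} lead to a contradiction. Alternatively, this degenerate case can be excluded explicitly as an assumption. Finally, the equality claim on $\mathbb{R}$ needs no restriction to $\Omega$, since the product formula holds for all real $z$ and no pole lies on $\mathbb{R}$.
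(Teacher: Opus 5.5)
Your proof is correct and takes essentially the paper's route: Lemma~\ref{lem:symm} writes $R_P$ as a unimodular constant times factors $(z-\bar z_p)/(z-z_p)$, each of modulus at most $1$ on the closed upper half-plane because $(\eta+\eta_p)^2\le(\eta-\eta_p)^2$ when $\eta\ge0>\eta_p$, with equality exactly on $\mathbb{R}$. Delete the garbled intermediate display, since the identity $|z-\bar w|^2-|z-w|^2=4\,\Imag z\,\Imag w$ is all you need; your treatment of the constant-denominator case is a refinement the paper leaves implicit, and it closes most simply via Lemma~\ref{lem:symm}, because a constant $Q_P$ would force $R_P\equiv1$ and hence $c_1=\iu t/2=0$.
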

%%%%
\begin{proof}
    %\pavel{May be it's better moving first part to the previous Lemma.} \daniel{I think this way is more understandable to read. better than giving the first part earlier to come back to it now.}
    Assume that $\{ z_p\}$, $p = 1,\dots, P$ are zeros of $Q_P$. 
    It follows from Lemma~\ref{lem:symm} that $\{\bar{z}_p\}$ are zeros of $T_P$. 
    Assuming the Conjecture~\ref{conj1}, we have $\Imag(z_p) < 0$ and, accordingly, $\Imag(\bar{z}_p) > 0$.  

    Let us now rewrite $R_P$ in the following way
    \begin{equation}\label{RP_fact}
       R_P = \varkappa \prod_{p=1}^P \frac{z-\bar{z}_p}{z - z_p}\,, 
    \end{equation}
    where $|\varkappa| = 1$ since $|R_P(z)|=1$, $z\in \mathbb{R}$. 
    Let us explicitly introduce real and imaginary parts of the numbers $z=\xi+\iu\eta$ and $z_p=\xi_p+\iu\eta_p$.
    Then the magnitudes of the factors in \eqref{RP_fact} can be estimated as
    \begin{equation*} 
        |z-\bar{z}_p|^2 = (\xi- \xi_p)^2 + (\eta + \eta_p)^2\,, 
    \end{equation*}
    \begin{equation*} 
    |z- z_p|^2 = (\xi- \xi_p)^2 + (\eta - \eta_p)^2\,. 
    \end{equation*}
    Assuming now $\eta \ge 0$ and recalling that $\eta_j<0$ we observe that
    %\begin{equation*} 
    $|\eta - \eta_p|^2 \ge |\eta + \eta_p|^2\,$, 
    %\end{equation*}
    which implies 
    %\begin{equation*}
    $|z  - \bar{z}_p| \le |z - z_p|\,$. 
    %\end{equation*}
    In other words, each factor in \eqref{RP_fact} satisfies
    $\bigl|\frac{z - \bar{z}_p}{z - z_p} \bigr|\le1,$
    and the equality is fulfilled if and only if $\eta =0$. 
\end{proof}

%%%%%%%%%%%%%%%%%%%%%%%
\subsection{Sesquilinear Form and the Spectrum of $\cD$}
In the previous subsection we showed that the poles of the symbol of the propagator's Pad\'e approximation \eqref{eq:pade} are located in the lower half-plane. 
If we could prove that the spectrum (or, say, numerical range) of the operator $\cX$ is restricted to the upper half-plane, the inequality from the Proposition~\ref{Prof_RP_ineq} would guarantee non-increasing of the norm by the SSP marching scheme \eqref{eq:pade}. 
In order to achieve this objective, we follow roughly the same path as the authors of \cite{kim2010} in our periodic setting.

Let us consider the following non-Hermitian bounded sesquilinear form
\begin{equation}\label{sesquiform}
    \langle u,v\rangle_S = \int_{-L}^L S(y) u(y) \bar{v}(y) \,dy
\end{equation}
on $H^2_{\mathrm{per}}(-L,L)$ that satisfies the property $\langle u,v\rangle_S = \overline{\langle u,v\rangle}_{\bar{S}}$.

%%%%%%%%%%%%%%%%%%%%%%%%%%%%%%%%%%%%%%%
\begin{proposition}\label{prop:eigenval}
    Every eigenvalue $\lambda$ of the operator $\cD$ defined on $H^2_{\mathrm{per}}(-L,L)$ has the positive imaginary part, i.e.\ $\Imag(\lambda)\ge 0$.
\end{proposition}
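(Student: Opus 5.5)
The plan is to use the sesquilinear form \eqref{sesquiform} to obtain a Rayleigh-quotient representation of any eigenvalue, and then to read off the sign of its imaginary part from the argument of the stretching function $S=S_y$. Let $u\in H^2_{\mathrm{per}}(-L,L)$, $u\not\equiv 0$, satisfy $\cD u=\lambda u$, that is,
\begin{equation*}
\frac{1}{k_0^2}\,\frac{1}{S}\bigl(S^{-1}u'\bigr)'=\lambda u .
\end{equation*}

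\textbf{Step 1: the identity.} I multiply the eigenvalue equation by $S\bar u$ and integrate over $(-L,L)$. This is exactly pairing $\cD u$ with $u$ in $\langle\cdot,\cdot\rangle_S$, and it cancels the outer factor $S^{-1}$:
\begin{equation*}
\frac{1}{k_0^2}\int_{-L}^{L}\bigl(S^{-1}u'\bigr)'\,\bar u\,dy=\lambda\int_{-L}^{L}S\,|u|^2\,dy .
\end{equation*}
I then integrate the left-hand side by parts. The boundary term $\bigl[S^{-1}u'\bar u\bigr]_{-L}^{L}$ vanishes because $u,u'$ are periodic and $S$ takes the same value at $\pm L$ ($S$ depends only on $|y|$). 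This gives
\begin{equation*}
\lambda=-\frac{A}{B},\qquad A=\int_{-L}^{L}\frac{|u'|^2}{S}\,dy,\qquad B=k_0^2\int_{-L}^{L}S\,|u|^2\,dy .
\end{equation*}
For this step I need $S^{-1}$ to be regular enough (e.g. $\sigma$ continuous and piecewise $C^1$) that $S^{-1}u'\in H^1_{\mathrm{per}}$; I will state this as a standing assumption on $\sigma$.

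\textbf{Step 2: locate $A$ and $B$ in cones.} This is the step that actually carries the result. By assumption $\Real S\ge1$ and $\Imag S\ge0$, so $S(y)$ lies in the closed sector $\{0\le\arg w<\pi/2\}$ for every $y$. Consequently:
\begin{itemize}
\item $S^{-1}(y)$ lies in the sector $\{-\pi/2<\arg w\le 0\}$.
\item $B$ is an integral with nonnegative weights $|u|^2$ of values in a convex cone, so $\arg B\in[0,\pi/2)$. Moreover $\Real B\ge k_0^2\|u\|_{L^2}^2>0$, hence $B\neq0$.
\item Similarly, either $A=0$ (then $u'\equiv0$ and $\lambda=0$, which is trivially fine), or $\arg A\in(-\pi/2,0]$.
\end{itemize}
In the nonzero case I get $\arg(A/B)=\arg A-\arg B\in(-\pi,0]$. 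Therefore
\begin{equation*}
\arg\lambda=\pi+\arg(A/B)\in(0,\pi],
\end{equation*}
which gives $\Imag\lambda\ge0$.

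As a sanity check I will verify the constant-$S$ case. There $\lambda=-\eta^2/(k_0^2S^2)$, and $\arg S^2\in[0,\pi)$ indeed places $\lambda$ in the closed upper half-plane, consistent with the general argument.

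\textbf{Expected main obstacle.} I do not expect the algebra to be the difficulty. The only delicate points are bookkeeping the arguments so that the difference stays strictly inside $(-\pi,0]$, and justifying the integration by parts. The strict inequality $\arg S<\pi/2$, which follows from $\Real S\ge1$, is what prevents $\arg(A/B)$ from reaching $-\pi$. Reaching $-\pi$ would give $\lambda$ on the positive real axis; that would still be harmless for $\Imag\lambda\ge0$, but I will keep the bound sharp for later use with Proposition~\ref{Prof_RP_ineq}.
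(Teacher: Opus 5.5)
Your proof is correct and follows the paper's route: you pair $\cD u=\lambda u$ with $u$ in the form $\langle\cdot,\cdot\rangle_S$, integrate by parts to write $\lambda$ as $-k_0^{-2}\int S^{-1}|u'|^2\,dy$ divided by $\int S|u|^2\,dy$, and then read off the sign of $\Imag\lambda$. Your sector/argument bookkeeping is a repackaging of the paper's sign check ($\alpha\le0$, $\beta\ge0$, $\mu>0$, $\nu\ge0$), with two small advantages: it uses only $\Real S\ge1$, $\Imag S\ge0$ rather than the special form $S=1+\mathrm{i}\sigma$ used in the paper's computation, and it explicitly justifies the vanishing boundary term via $S(L)=S(-L)$.
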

\begin{proof}
    Assume that $\lambda$ is an eigenvalue of $\cD$, and $u_{\lambda}(y)$ its %respective 
    eigenfunction, i.e.,
    \begin{equation*} 
    \cD u_{\lambda} = \lambda u_{\lambda}\,, \quad u_{\lambda} \neq 0\,.
    \end{equation*}
    We substitute both sides of this equality into $\langle\cdot, u_{\lambda} \rangle_S$. 
    On the left-hand side we have
    \begin{multline*}
    \langle \cD u_{\lambda},u_{\lambda}\rangle_S 
    = \int_{-L}^L S(y)\,\cD u_{\lambda}(y) \bar{u}_{\lambda} \,dy 
    = \frac{1}{k_0^2} \int_{-L}^L \partial_y \Bigl(\frac{1}{S} \partial_y u_{\lambda}\Bigr) \bar{u}_{\lambda} \,dy \\ 
    =\frac{1}{k_0^2}\biggl[\frac{1}{S}\partial_y u_{\lambda} \bar{u}_{\lambda} |_{-L}^L - \int_{-L}^L \frac{1}{S}\partial_y u_{\lambda} \partial_y \bar{u}_{\lambda} \,dy \biggr] 
    = -\frac{1}{k_0^2}\int_{-L}^L \frac{1}{S} |\partial_y u_{\lambda}|^2 \,dy 
    \overset{\text{by def}}{\equiv} \alpha + \iu\beta\,.     
    \end{multline*}
    Observing that $\frac{1}{S} = \frac{1 - \iu\sigma}{1+\sigma^2}$ (where $\sigma(y)$ is real and non-negative), we conclude that 
    \begin{equation*} 
    \alpha =  -\frac{1}{k_0^2}\int_{-L}^L \frac{|\partial_y u_{\lambda}|^2}{1+\sigma^2}  dy \le 0\,, \qquad 
    \beta =  \frac{1}{k_0^2}\int_{-L}^L \frac{\sigma |\partial_y u_{\lambda}|^2}{1+\sigma^2}  dy \ge 0\,. 
    \end{equation*}

    At the same time, on the right-hand side we obtain
    \begin{equation*} 
    \langle \lambda u_{\lambda}, u_{\lambda} \rangle_S = \lambda \int_{-L}^L S |u|^2 dy \overset{\text{by def}}{\equiv} \lambda (\mu + \iu\nu)\,,
    \end{equation*}
    where $\mu >0$, $\nu >0$. Equality $\langle \cD u_{\lambda}, u_{\lambda} \rangle_S = \langle \lambda u_{\lambda}, u_{\lambda} \rangle_S$ therefore implies that
    \begin{equation*} 
    \lambda = \frac{\alpha + \iu \beta}{\mu + \iu\nu } = \frac{(\alpha + \iu \beta)(\mu - \iu\nu)}{\mu^2 + \nu^2 }\,.
    \end{equation*}
    Since both $\mu\beta$ and $-\nu\alpha$ are non-negative, it is clear that $\Imag(\lambda)\ge 0$.
\end{proof}

%%%%%%%%%%%%%%%%%%%%%%%%%
\subsection{Spectral Radius of $R_P(\cD)$}
To conclude this section, we emphasize that our iterations given by \eqref{eq:pade} transform the functions from $H^2_{\mathrm{per}}(-L,L)$ to the functions from the same space. 
This fact follows from regularity theorems \cite{evans2022} that are usually formulated for the case of Dirichlet boundary conditions but can be also proven in the periodic case without any substantial reformulation.

%%%%%%%%%%%%%%%%%%%%%%%%%%%%%%%%%%%%%%%%%%%%%%
\subsection{Discrete Fourier Stability}
We now investigate the stability of the fully discrete scheme from the perspective of its Fourier representation.
Because the spatial discretization is based on a Fourier spectral method, analyzing the behavior of individual Fourier modes is a natural approach.
To this end, we express the numerical solution at a fixed range step in terms of its discrete Fourier series,
\begin{equation*}
     u_N(y) = \sum_{j=-N/2}^{N/2-1} \hat{u}_j\, \mathrm{e}^{\iu \eta_j y}, 
\qquad \eta_j = \frac{\pi j}{L_y}.
\end{equation*}
In this representation, differentiation with respect to $y$ corresponds to a multiplication by $\iu\eta_j$, 
allowing us to interpret the action of the differential operators mode by mode.

Without a PML, the second derivative operator acts diagonally in Fourier space, with eigenvalues $-\kappa_j^2$. 
The presence of the PML modifies this structure via the complex stretching function $S_y(y)$. 
This formally leads to a modified symbol of the form
%\begin{equation*}
   %-\frac{\kappa_p^2}{S_y^2(y)},
%\end{equation*}
$-\kappa_j^2/S_y^2(y)$,
which introduces a complex-valued damping factor. 
Since $\Real S_y(y) \ge 1$ and $\Imag S_y(y) \ge 0$, the real part of this expression is non-positive, 
indicating that the PML induces attenuation of the Fourier modes, particularly for large wave numbers.

Combining this with the bounded contribution of the wave number term $k^2(y)$, 
we obtain an effective symbol for the operator $\cX_{\mathrm{PML}}$ of the form
\begin{equation*}
     \lambda_j(y) = \frac{k^2(y) - k_0^2 - \eta_j^2 / S_y^2(y)}{k_0^2}.
\end{equation*}
The behavior of the numerical scheme can thus be understood by examining how each mode propagates under the Pad\'e approximation.

For a single propagation step, each Fourier mode is multiplied by an amplification factor of the form
\begin{equation*}
    G_j = \mathrm{e}^{\iu k_0 h} \biggl(d_0 + \sum_{j=1}^P \frac{d_j}{1 + b_j \lambda_p} \biggr).
\end{equation*}
The exponential prefactor has unit modulus and therefore does not affect stability. 
The remaining expression depends on the location of $\lambda_p$ in the complex plane.

As discussed earlier, the real part of $\lambda_p$ is bounded due to the dissipative contribution of the PML. 
Specifically, the damping term $-\eta_j^2 / S_y^2(y)$ shifts high-frequency modes further into the stable region of the complex plane. 
Since the Pad\'e coefficients satisfy $\Real b_p > 0$, the denominators $1 + b_p \lambda_p$ remain bounded away from zero,
and the amplification factor remains finite.

This implies that the magnitude of $G_p$ does not grow uncontrollably with $p$. 
More precisely, there exists a constant $C$ such that
%\begin{equation*}
   $|G_p| \le 1 + C h$,
%\end{equation*}
uniformly in $p$ and $N$, for sufficiently small step size $h$. 
Thus, the discrete solution satisfies the stability bound
\begin{equation*}
    \|u^{n+1}_N\|_{\ell^2} \le (1 + C h) \,\|u^n_N\|_{\ell^2},
\end{equation*}
where the norm is defined via the Fourier coefficients.
% re\matthias{Do we have to define the discrete L2 norm?}

We observe that the presence of the PML stabilizes the scheme by introducing additional damping for high-frequency components. 
This prevents the accumulation of unresolved oscillations, thereby contributing to the robustness of the method in practical computations.

\section{Radiowave Propagation in the Troposphere}\label{sec:radio}
Wide-angle parabolic \linebreak equations based on the Pad\'e expansion of an operator square root exponential appear in the literature in several different contexts. 
In the first example, we address the problem of tropospheric radiowave propagation. 
In this case, equation is solved on a vertical half-plane ($x,z$ coordinates, $z$ is the altitude, $z=0$ is the Earth's surface). 

\subsection{The Problem Statement}
In two-dimensional Cartesian coordinates, the horizontally polarized, time-harmonic (monochromatic) electric field component \linebreak $E_y(x, z) =  \psi(x, z)$ can be described by the 2D %two-dimensional 
 Helmholtz equation
\begin{equation}\label{HE_rad}
    \partial_x^2 \psi + \partial_z^2 \psi + k_0^2m^2\psi = \delta(x)Q(z)\,,
\end{equation}
where $k_0$ is the wave number of the vacuum, and $m^2(x, z) = 1 + 2n(x, z) + 2z/R$. 
Here, $n(x, z)$ is the variable refractive index, $R$ is the curvature radius of the Earth's surface, and $Q(z)$ models the geometry of the antenna. 
This problem setup is very common across the radio physics community \cite{levy2000parabolic, lytaev2018splitirregular, lytaev2018nonlocal, lytaev2026}.

Assuming that back-scattering in the horizontal direction is negligible, one can replace the BVP for \eqref{HE_rad} by a Cauchy problem for its one-way counterpart
\begin{equation*}
    \partial_x u = \iu k \bigl( \sqrt{1 + \cX} - 1 \bigr)u\,, \quad u|_{x=0} = u_0(z)\,,
\end{equation*}
where $u(x,z) = \mathrm{e}^{-\iu k_0 x}\psi(x,z)$ is an envelope function of a rapidly oscillating electric field, and the Cauchy data $u_0(z)$ is defined through $Q(z)$.

The ground can be represented by a homogeneous Dirichlet boundary condition. 
Due to changes in the refractive index,
the radio field is usually partially reflected at the boundary between atmospheric layers. 
The rest of the field then escapes into the higher layers of the atmosphere. 
Efficient simulation requires one-sided truncation of the domain, which was achieved using a PML. 
In this scenario, homogeneous Dirichlet boundary conditions can be addressed by extending the domain with a mirror domain,
as described by Smith and Tappert \cite{smith1993umpe}.

%%%%%%%%%%%% Section 4.2.
\subsection{Numerical Results}

%%%%%%%%%%%%%%%%%%%%%%%%%%%%%%%%
% \subsection{Radio Physics}
\begin{figure}[H]
    \centering
    \includegraphics[width=\linewidth]{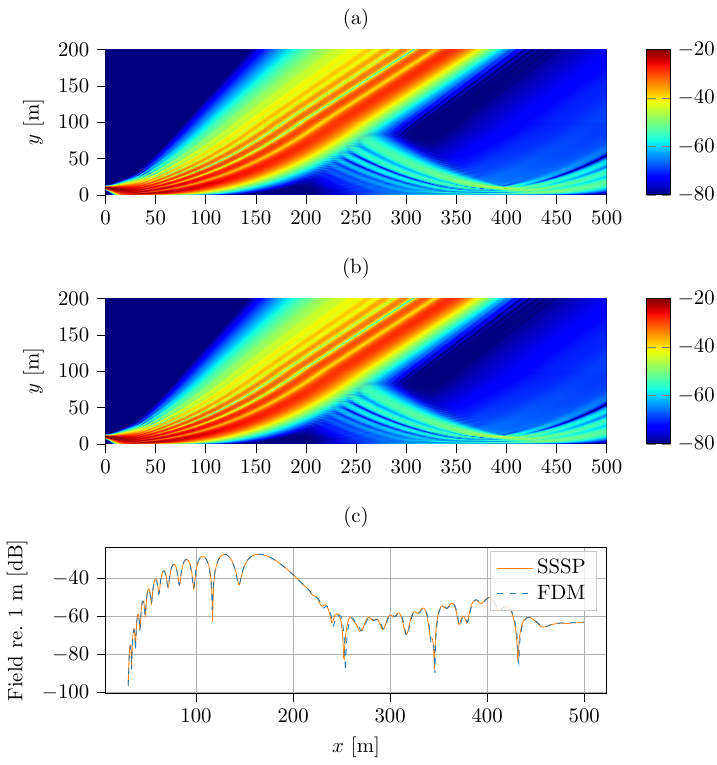}
%     \caption{Propagation of a radio wave with $\lambda=1$\,m through a discontinuous sound speed profile. 
% The discontinuity at $z=100$\,m produces reflections in the field. 
% Both methods were run with the $[1/1]$-Pad\'e expansion.
% All fields are displayed in [dB], relative to the field 1\,m from the source.
% The first plot shows a reference field computed by a classical finite difference (FD) based SSP method. The second plot is produced by the proposed SSSP method. The third plot shows the field in dB, measured at a height of $30$\,m.}
    \caption{Radio wave propagation. (a) shows a reference field computed by Finite Difference SSP, (b) shows the field computed by SSSP and (c) shows a comparison along a slice at $z=30$\,m. All fields are displayed in dB~re.~1\,m.}
    \label{fig:radio}
\end{figure}
The computation is an example of a radio wave with a frequency of 300\,MHz emitted from a parabolic antenna and propagating through the lower atmosphere.
We use the initial condition is modeled as
\begin{equation*}
    u_0(z) = \frac{k_0 \beta}{2 \pi \log_{10}(2)} 
    \,\mathrm{e}^{-\iu k_0 \theta_0 z} 
    \,\mathrm{e}^{-\frac{\beta^2 k_0^2}{8 \log_{10}(2)} (z - z_0)},
\end{equation*}
with $\beta=10^\circ$, $\theta_0 = 10^\circ$ and $k_0=\frac{\omega}{c_0} = 2\pi$\,m$^{-1}$.
 The modified refractive index is defined via
 \begin{equation*}
     m^2(z) = \begin{cases}
        z/250 -1/2 \quad &\textrm{if} ~ z\in[0,100] \\
        0 \quad &\textrm{else}
    \end{cases}.
\end{equation*}
This setting was chosen to show the method's capability to reproduce reflections caused by discontinuities in the equations coefficients representing media interfaces.
A PML with a width of 10 meters was used above $z=200$\,m.
Both the spectral as well as the finite difference method were run for $P=1$ with the step size of 0.1\,m in $x$ as well as 2048 points in $z$.
For the SSSP simulation, BiCGStab was employed to solve the system, using a multiplicative preconditioner 
$\cX_0^{-1} = (1 + k_0^2n^2)^{-1}(1 + \partial_y^2)^{-1}$ and a relative tolerance of $10^{-6}$ as stopping criterion.

The radio signal gets focused into the upper atmosphere by the gradient in the refractive index in the lower troposphere.
The discontinuity at $z=100$\,m introduces reflections of the field back towards the Earth's surface $z=0$.

As shown in Fig.~\ref{fig:radio}, the SSSP solution is in excellent agreement well with the finite difference SSP, even at comparably small number of points in $z$.

%%%%%%%%%%%%%%%%%
\section{Sound Propagation in a 3D Shallow-Water Wave\-guide}\label{sec:sw_acoust}
%\section{Sound Propagation in a Three-Dimensional Shallow-Water Waveguide}\label{sec:sw_acoust}
The second case is related to the propagation of acoustic waves in a three-dimensional shallow water environment. 
This problem can be reduced to solving several one-way counterparts of the 2D Helmholtz equation within the framework of adiabatic normal modes \cite{jensen2011computational, PETROV2020115526}. 
A domain truncation issue naturally arises because the area of interest is typically a small part of the ocean, and reflection-free transmission through its boundaries is essential in practical problems.

%%%%%%%%%%%%%%%%%%%%%%%%%%%%
\subsection{Mode Parabolic Equations}\label{sec:2MPE}

We consider the stationary acoustic field $p=p(x,y,z)$ produced by a time-harmonic point source of the frequency $f$ is located at $x=0, y=0, z=z_s$, that can be described by the \textit{three-dimensional Helmholtz equation} \cite{jensen2011computational}
\begin{equation}
    \begin{split}\label{eq:3DHelmholtz}
       &\partial_x^2 p + \partial_y^2 p + \partial_z^2 p + \frac{\omega^2}{c^2}  \,p
        = -\delta(x, y, z-z_s), \quad \mathrm{in}\; \Omega\,,\\
        & p|_{z=H} = 0\,,\quad p|_{z=0} = 0\,,
        % Sommerfeld condition
        %\lim_{r\to\infty}\left(\partial_n - i\frac{\omega}{c(r\sin\phi,r\cos\phi,z)}\right)p(r\sin\phi,r\cos\phi,z) &= 0
    \end{split}
\end{equation}
in the domain $\Omega = (-\infty,\infty) \times (-\infty, \infty) \times [0,H]$, 
where $z=0$ is the sea surface, $z=H$ is lower boundary of computational domain
(typically taken to be sufficiently large in order to incorporate one or several layers of sediments). 
It is also assumed that a suitable Sveshnikov-type radiation boundary condition is imposed for $r= \sqrt{x^2+y^2} \to \infty$ in order to guarantee uniqueness of the solution of Eq.~\eqref{eq:3DHelmholtz} (see, e.g., \cite{liu2013uniqueness}).
Here, $\omega = 2\pi f$ is the angular frequency, and $c(x,y,z)$ denotes the speed of sound.

One then employs the \textit{modal expansion ansatz}
\begin{equation}\label{eq:modeansatz}
    p(x,y,z) = \sum_{j=1}^{N_m} \mathcal{A}_j(x,y) \,\phi_j(z;x,y)\,,
\end{equation}
where $\phi_j(z;x,y)$ are mode functions computed from the media parameters at the point $(x,y)$, and $\mathcal{A}_j(x,y)$ are the corresponding mode amplitudes.

Under adiabatic assumption it can be shown \cite{jensen2011computational} mode amplitudes satisfy a two-dimensional Helmholtz equation
\begin{equation}\label{eq:modeamplitude}
   \partial^2_x \mathcal{A}_j + \partial^2_y \mathcal{A}_j + k_j^2  \mathcal{A}_j = -\delta(x) \delta(y)\frac{\phi_j(z_s)}{\rho_w}\,.
\end{equation}
The one-way counterpart of Eq.~\eqref{eq:modeamplitude} is called pseudodifferential \textit{mode parabolic equation} (MPE) 
\begin{equation}\label{eq:ampl_evolution}
    \partial_x \mathcal{A}_j = \iu \sqrt{\partial_y^2 + k^2_j}\,\mathcal{A}_j\,, \quad x,y \in (0,x_{\max})\times(-L_y^*,L_y^*)\,,
\end{equation}
which identical to Eq.~\eqref{eq:square_root}.

\begin{figure}[H]
    \centering
    %%% scale the tikz picture
    % \begin{adjustbox}{width=0.9\textwidth}
    % \input{images/tikz_plots/ocean_homog}
    %  \end{adjustbox}
     \begin{subfigure}{0.49\linewidth}
        \begin{adjustbox}{width=0.99\linewidth}
         \tikzset{
  labelbox/.style={
    fill=gray!5,        % background color
    fill opacity=0.8,  % how transparent the background is
    text opacity=1,    % text stays fully opaque
    inner sep=1pt,     % padding around the text
    rounded corners=2pt
  }
}

% viewing angles: adjust if you want a different perspective
\tdplotsetmaincoords{78}{135}

\begin{tikzpicture}[scale=2,tdplot_main_coords]
  %--------------------------------------------------
  % Parameters (geometric)
  %--------------------------------------------------
  \def\Lx{6}   % basin length in x-direction
  \def\Lpml{0.5} % size of PML layers
  \def\Ly{2}   % basin width in y-direction
  \def\H{2}  % basin depth (z from 0 at surface to H at bottom)
  \def\zs{0.4} % source depth
  \def\zr{1.1} % receiver plane depth
  \def\ticklen{0.12} % tick length

  % We map physical depth z>0 to negative tikz z-coordinate: (x,y,-z)
  % so that the z-axis visually points downward from the surface.

  %--------------------------------------------------
  % Basin geometry (rectangular tank)
  %--------------------------------------------------

  %--------------------------------------------------
  % Bottom plane
  %--------------------------------------------------
  \fill[black!20,opacity=0.6]
    (0,-\Ly-\Lpml,-\H) -- (\Lx,-\Ly-\Lpml,-\H) -- (\Lx,\Ly+\Lpml,-\H) -- (0,\Ly+\Lpml,-\H) -- cycle;

  %--------------------------------------------------
  % Water fill (semi-transparent)
  %--------------------------------------------------
  % top surface
  \fill[blue!20,opacity=0.4]
    (0,-\Ly-\Lpml,0) -- (\Lx,-\Ly-\Lpml,0) -- (\Lx,\Ly+\Lpml,0) -- (0,\Ly+\Lpml,0) -- cycle;
  % front side (y=0)
  \fill[blue!10,opacity=0.4]
    (0,-\Ly-\Lpml,0) -- (\Lx,-\Ly-\Lpml,0) -- (\Lx,-\Ly-\Lpml,-\H) -- (0,-\Ly-\Lpml,-\H) -- cycle;
  % left side (x=0)
  \fill[blue!10,opacity=0.4]
    (0,-\Ly-\Lpml,0) -- (0,\Ly+\Lpml,0) -- (0,\Ly+\Lpml,-\H) -- (0,-\Ly-\Lpml,-\H) -- cycle;

  % top rectangle + PML (water surface) at z = 0
  \draw[dashed]
    (0,-\Ly-\Lpml,0) -- (\Lx,-\Ly-\Lpml,0) -- (\Lx,\Ly+\Lpml,0) -- (0,\Ly+\Lpml,0) -- cycle;

  % bottom rectangle + PML at z = H  (drawn as z = -H)
  \draw[dashed]
    (0,-\Ly-\Lpml,-\H) -- (\Lx,-\Ly-\Lpml,-\H) -- (\Lx,\Ly+\Lpml,-\H) -- (0,\Ly+\Lpml,-\H) -- cycle;

  % top rectangle (water surface) at z = 0
  \draw[thick]
    (0,-\Ly,0) -- (\Lx,-\Ly,0) -- (\Lx,\Ly,0) -- (0,\Ly,0) -- cycle;

  % bottom rectangle at z = H  (drawn as z = -H)
  \draw[thick]
    (0,-\Ly,-\H) -- (\Lx,-\Ly,-\H) -- (\Lx,\Ly,-\H) -- (0,\Ly,-\H) -- cycle;

  % vertical edges
  \draw[thick] (0,-\Ly,0) -- (0, -\Ly, -\H);
  \draw[thick] (\Lx,-\Ly,0) -- (\Lx, -\Ly, -\H);
  \draw[thick] (0,\Ly,0) -- (0, \Ly, -\H);
  \draw[thick] (\Lx,\Ly,0) -- (\Lx, \Ly, -\H);

  % vertical edges+PML
  \draw[dashed] (0,-\Ly-\Lpml,0)      -- (0,-\Ly-\Lpml,-\H);
  \draw[dashed] (\Lx,-\Ly-\Lpml,0)    -- (\Lx,-\Ly-\Lpml,-\H);
  \draw[dashed] (\Lx,\Ly+\Lpml,0)  -- (\Lx,\Ly+\Lpml,-\H);
  \draw[dashed] (0,\Ly+\Lpml,0)    -- (0,\Ly+\Lpml,-\H);

  %--------------------------------------------------
  % Coordinate axes (origin at back-left, surface corner)
  %--------------------------------------------------
  \draw[->,thick] (0,0,0) -- (\Lx+0.8,0,0) node[labelbox,anchor=east] {\Large$x$};
  \draw[->,thick] (0,\Ly+\Lpml+0.8,0) -- (0,-\Ly-\Lpml-0.8,0) node[labelbox,anchor=south] {\Large$y$};
  % z-axis pointing downward (depth)
  \draw[->,thick] (0,0,0) -- (0,0,-\H-0.85) node[labelbox,anchor=east] {\Large$z$};

  % Propagation direction
  % \draw[->,thick] (\Lx/4,-\Ly/4*3,0) -- (\Lx/2, -\Ly/4*3,0) node[labelbox,anchor=north,align=left] {propagation\\direction};

  %--------------------------------------------------
  % Receiver plane at z = z_r  (horizontal)
  %--------------------------------------------------
  \draw[red!60,thick,dashed]
    (0,-\Ly-\Lpml,-\zr) -- (\Lx,-\Ly-\Lpml,-\zr) -- (\Lx,\Ly+\Lpml,-\zr) -- (0,\Ly+\Lpml,-\zr) -- cycle;
  \draw[red!60,thick]
    (0,-\Ly,-\zr) -- (\Lx,-\Ly,-\zr) -- (\Lx,\Ly,-\zr) -- (0,\Ly,-\zr) -- cycle;

  %---------------------------------------------------
  % Tick labels
  %---------------------------------------------------
  \draw[thick] (0,-\Ly,0) -- ++(-\ticklen,0,0) node[labelbox,anchor=west] { \Large $-L_y^*$ };
  \draw[thick] (0,\Ly,0) -- ++(-\ticklen,0,0) node[labelbox,anchor=west] {\Large $L_y^*$ };
  \draw[thick] (0,-\Ly-\Lpml,0) -- ++(-\ticklen,0,0) node[labelbox,anchor=west] {\Large $-L_y$ };
  \draw[thick] (0,\Ly+\Lpml,0) -- ++(-\ticklen,0,0) node[labelbox,anchor=west] { \Large$L_y$ };

  \draw[thick] (0,0,0) -- ++(-\ticklen,0,0) node[labelbox,anchor=west] {\Large $(0,0,0)$ };

  \draw[thick] (0,0,-\H) -- ++(-\ticklen,0,0) node[labelbox,anchor=west] {\Large $z=H$ };

  \draw[red!60,thick] (0,0,-\zr) -- ++(-\ticklen,0,0) node[labelbox,anchor=west] { \Large$z=z_r$};

  %--------------------------------------------------
  % Acoustic point source at (x,y,z) = (0,0,z_s)
  %--------------------------------------------------
  \draw[fill=black] (0,0,-\zs) circle (0.04);
  \node[labelbox,anchor=east,align=left] at (0,0,-\zs)
    {\Large source\\$z=z_s$};

  \node[above,font=\Large] at (current bounding box.north) {(a)};

\end{tikzpicture}
        \end{adjustbox}
     \end{subfigure}
     \begin{subfigure}{0.49\linewidth}
        \begin{adjustbox}{width=0.99\linewidth}
         \tikzset{
  labelbox/.style={
    fill=gray!5,        % background color
    fill opacity=0.8,  % how transparent the background is
    text opacity=1,    % text stays fully opaque
    inner sep=1pt,     % padding around the text
    rounded corners=2pt
  }
}

% viewing angles: adjust if you want a different perspective
\tdplotsetmaincoords{78}{135}

\begin{tikzpicture}[scale=2,tdplot_main_coords]

  %--------------------------------------------------
  % Parameters (geometric)
  %--------------------------------------------------
  \def\Lx{6}   % basin length in x-direction
  \def\Lpml{0.5} % size of PML layers
  \def\Ly{2}   % basin width in y-direction
  \def\H{2}  % basin depth (z from 0 at surface to H at bottom)
  \def\zs{0.9} % source depth
  \def\zr{0.4} % receiver plane depth
  \def\ticklen{0.12}

  % We map physical depth z>0 to negative tikz z-coordinate: (x,y,-z)
  % so that the z-axis visually points downward from the surface.

  %--------------------------------------------------
  % Basin geometry (rectangular tank)
  %--------------------------------------------------

  %--------------------------------------------------
  % Bottom (wedge)
  %--------------------------------------------------
  \fill[black!20,opacity=0.6]
    (0,-\Ly,0) -- (\Lx,-\Ly,0) -- (\Lx,\Ly,-\H) -- (0,\Ly,-\H) -- cycle;
  \fill[black!10,opacity=0.6]
    (\Lx,-\Ly,0) -- (\Lx,-\Ly,-\H) -- (\Lx,\Ly,-\H) --cycle;

  \fill[black!10,opacity=0.6]
    (\Lx,-\Ly-\Lpml,0) -- (\Lx,-\Ly-\Lpml,-\H) -- (\Lx,-\Ly,-\H) -- (\Lx,-\Ly,0) -- cycle;

  \fill[black!20,opacity=0.6]
    (0,-\Ly,0) -- (0,-\Ly-\Lpml,0) -- (\Lx,-\Ly-\Lpml,0) -- (\Lx,-\Ly,0) -- cycle;

  %--------------------------------------------------
  % Receiver plane at z = z_r  (horizontal)
  %--------------------------------------------------
  \draw[red!60,thick,dashed]
    (0,-\Ly-\Lpml,-\zr) -- (\Lx,-\Ly-\Lpml,-\zr) -- (\Lx,\Ly+\Lpml,-\zr) -- (0,\Ly+\Lpml,-\zr) -- cycle;
  \draw[red!60,thick]
    (0,-\Ly,-\zr) -- (\Lx,-\Ly,-\zr) -- (\Lx,\Ly,-\zr) -- (0,\Ly,-\zr) -- cycle;
  %--------------------------------------------------
  % Water fill (semi-transparent)
  %--------------------------------------------------
  % top surface
  \fill[blue!20,opacity=0.4]
    (0,-\Ly,0) -- (\Lx,-\Ly,0) -- (\Lx,\Ly+\Lpml,0) -- (0,\Ly+\Lpml,0) -- cycle;
  % front side (y=0)
  %\fill[blue!10,opacity=0.4]
  %  (0,-\Ly-\Lpml,0) -- (\Lx,-\Ly-\Lpml,0) -- (\Lx,-\Ly-\Lpml,-\H) -- (0,-\Ly-\Lpml,-\H) -- cycle;
  % left side (x=0)
  \fill[blue!10,opacity=0.4]
    (0,-\Ly-\Lpml,0) -- (0,\Ly+\Lpml,0) -- (0,\Ly+\Lpml,-\H) -- (0,-\Ly-\Lpml,-\H) -- cycle;

  \fill[blue!10,opacity=0.4]
    (0,\Ly,-\H) -- (\Lx,\Ly,-\H) -- (\Lx,\Ly+\Lpml,-\H) -- (0,\Ly+\Lpml,-\H) -- cycle;

  \draw[thick]
    (0,-\Ly,0) -- (\Lx,-\Ly,0) -- (\Lx,\Ly,-\H) -- (0,\Ly,-\H) -- cycle;

  % top rectangle + PML (water surface) at z = 0
  \draw[dashed]
    (0,-\Ly-\Lpml,0) -- (\Lx,-\Ly-\Lpml,0) -- (\Lx,\Ly+\Lpml,0) -- (0,\Ly+\Lpml,0) -- cycle;

  % bottom rectangle + PML at z = H  (drawn as z = -H)
  \draw[dashed]
    (0,-\Ly-\Lpml,-\H) -- (\Lx,-\Ly-\Lpml,-\H) -- (\Lx,\Ly+\Lpml,-\H) -- (0,\Ly+\Lpml,-\H) -- cycle;

  % top rectangle (water surface) at z = 0
  \draw[thick]
    (0,-\Ly,0) -- (\Lx,-\Ly,0) -- (\Lx,\Ly,0) -- (0,\Ly,0) -- cycle;

  % bottom rectangle at z = H  (drawn as z = -H)
  \draw[thick]
    (0,-\Ly,-\H) -- (\Lx,-\Ly,-\H) -- (\Lx,\Ly,-\H) -- (0,\Ly,-\H) -- cycle;

  % vertical edges
  \draw[thick] (0,-\Ly,0) -- (0, -\Ly, -\H);
  \draw[thick] (\Lx,-\Ly,0) -- (\Lx, -\Ly, -\H);
  \draw[thick] (0,\Ly,0) -- (0, \Ly, -\H);
  \draw[thick] (\Lx,\Ly,0) -- (\Lx, \Ly, -\H);

  % vertical edges+PML
  \draw[dashed] (0,-\Ly-\Lpml,0)      -- (0,-\Ly-\Lpml,-\H);
  \draw[dashed] (\Lx,-\Ly-\Lpml,0)    -- (\Lx,-\Ly-\Lpml,-\H);
  \draw[dashed] (\Lx,\Ly+\Lpml,0)  -- (\Lx,\Ly+\Lpml,-\H);
  \draw[dashed] (0,\Ly+\Lpml,0)    -- (0,\Ly+\Lpml,-\H);

  %--------------------------------------------------
  % Coordinate axes (origin at back-left, surface corner)
  %--------------------------------------------------
  \draw[->,thick] (0,0,0) -- (\Lx+0.8,0,0) node[labelbox,anchor=east] {\Large$x$};
  \draw[->,thick] (0,\Ly+\Lpml+0.8,0) -- (0,-\Ly-\Lpml-0.8,0) node[labelbox,anchor=south] {\Large$y$};
  % z-axis pointing downward (depth)
  \draw[->,thick] (0,0,0) -- (0,0,-\H-0.6) node[labelbox,anchor=east] {\Large$z$};

  %---------------------------------------------------
  % Tick labels
  %---------------------------------------------------
  \draw[thick] (0,-\Ly,0) -- ++(-\ticklen,0,0) node[labelbox,anchor=west] {\Large $-L_y^*$ };
  \draw[thick] (0,\Ly,0) -- ++(-\ticklen,0,0) node[labelbox,anchor=west] {\Large $L_y^*$ };
  \draw[thick] (0,-\Ly-\Lpml,0) -- ++(-\ticklen,0,0) node[labelbox,anchor=west] {\Large $-L_y$ };
  \draw[thick] (0,\Ly+\Lpml,0) -- ++(-\ticklen,0,0) node[labelbox,anchor=west] {\Large $L_y$ };

  \draw[thick] (0,0,0) -- ++(-\ticklen,0,0) node[labelbox,anchor=west] {\Large $(0,0,0)$ };

  \draw[thick] (0,0,-\H) -- ++(-\ticklen,0,0) node[labelbox,anchor=west] {\Large $z=H$ };

  \draw[red,thick] (0,0,-\zr) -- ++(-\ticklen,0,0) node[labelbox,anchor=west] {\Large $z=z_r$};

  % labels for z=0 (surface) and z=H (bottom)

  % \draw[->,thick] (\Lx/4,-\Ly/4*3,0) -- (\Lx/2, -\Ly/4*3,0) node[labelbox,anchor=north,align=left] {propagation\\direction};

  %--------------------------------------------------
  % Acoustic point source at (x,y,z) = (0,0,z_s)
  %--------------------------------------------------
  \draw[fill=black] (0,0,-\zs) circle (0.04);
  \node[labelbox,anchor=east,align=left] at (0,0,-\zs)
    {\Large source\\$z=z_s$};
  \node[above,font=\Large] at (current bounding box.north) {(b)};

\end{tikzpicture}
        \end{adjustbox}
     \end{subfigure}
    \caption{The domains of the mode parabolic equation in underwater acoustics. 
    The dashed box contains the computational domain $\Omega_{\textrm{comp}}=(0,R)\times(-L_y^*, L_y^*)\times (0,H)$  consisting of the physical domain $\Omega_{\textrm{phys}}=(0,R)\times(-L_y^*, L_y^*)\times (0,H)$ represented by the continuously drawn box and a perfectly matched absorbing layer in $y$. 
    An initial impulse emitted by the source at $(0,0,z_s)$ is propagated in $x$ direction. 
    The results are presented at the plane $z=z_r$, shown in red.
    (a) displays the general setting, while (b) shows the specific wedge test case, with the bottom slope across the propagation direction $x$.}
    \label{fig:flat_bottom_schematic}
\end{figure}

\begin{comment}
\begin{figure}
    \centering
    \input{images/tikz_plots/ocean_wedge}
    \caption{An example from underwater acoustics. 
    The difference to Fig.~\ref{fig:flat_bottom_schematic} is that the bottom is now a wedge (i.e.\ a shoreline) and the field is propagated along it.}
    \label{fig:wedge_schematic}
\end{figure}
\end{comment}
It has been shown recently that SSP method is particularly efficient for solving Eq.~\eqref{eq:ampl_evolution} \cite{PETROV2020115526, petrovantoine2020} as was already mentioned in Sec.~\ref{sec:SSP_classical}. 
In this section we that its spectral counterpart is also excels in this task.

% \begin{figure}[ht!]
%     \centering
%     \begin{subfigure}{\linewidth}
%         \include{images/field_radio_FDM}
%     \end{subfigure}
%     \begin{subfigure}{\linewidth}
%         \include{images/field_radio_SSSP}
%     \end{subfigure}
%     \begin{subfigure}{\linewidth}
%         \include{images/radio_comparison}
%     \end{subfigure}
%     \label{fig:radio}
%     \caption{Propagation of a radio wave with $\lambda=1$\,m through a discontinuous sound speed profile. The discontinuity at $z=100$\,m produces reflections in the field. Both methods were run with the $[1/1]$ Pad\'e expansion. The first plot displays a reference field computed by a classical finite difference (FD) based SSP method, the second one is produced by the SSSP.
%     The third plot compares a slice at $y=30$\,m between the}
% \end{figure}

%%%%%%%%%%%%%%%%%%%%%
\subsection{Test scenarios}

In the framework of the mode parabolic equations theory, we consider two scenarios of sound propagation in shallow water.

First, we examine a Pekeris-type waveguide with a flat bottom to verify the capabilities of the PML (see Fig.~\ref{fig:flat_bottom_schematic}a).
Then, we address the classic issue of sound propagation in a coastal wedge \cite{jensen2011computational}, i.e., a shallow sea with a sloping bottom (see Fig.~\ref{fig:flat_bottom_schematic}b). 
In both scenarios, our waveguide consists of an upper layer representing seawater (with a sound speed of $c=1500$\,m/s and a density $\rho=1$\,g/cm$^3$) and a bottom sediment (sound speed $c=1700$\,m/s, density $\rho=1.5$\,g/cm$^3$).
The computational domain is chosen as $\Omega_{\mathrm{phys}} = \{(x,y,z)|0 \le x \le 25\,\mathrm{km}$, $-4\,\mathrm{km} \le y \le 4\,\mathrm{km}$, $0 \le z\le400\,\mathrm{m}$\}. 
The acoustic field $p(x,y,z)$ is computed using Eq.~\eqref{eq:modeansatz}, and the normal modes $\phi_j(z,x,y)$ and their wavenumbers $k_j(x,y)$ are obtained using the \texttt{ac\_modes} solver \cite{PETROV2020115526}. 
The proposed SSSP method is then used to solve equations \eqref{eq:ampl_evolution} with initial conditions $\mathcal{A}_j(0,y)$, which are obtained using the so-called ray starter \cite{PETROV2020115526}. $400$\,m-wide PMLs are added on both sides of the domain (at $y=\pm L^*_y$).

In both cases, we compute the acoustic field due to a point source of the frequency $f=25$\,Hz, located at $z_s=100$\,m, $y_s=0$\,m, $x_s=0$\,m. 
We compare and visualize the computational results for the horizontal plane $z=z_r=30$\,m. 
A $[9/9]$-Pad\'e  approximation is used in both cases, and  $N_m = 8$ modes are taken into account.

To solve the linear system for all numerical simulations, the BiCGStab method used a multiplicative preconditioner $\cX_0^{-1} = (1 + \delta k^2)^{-1}(1 + \partial_y^2)^{-1}$.
The method was run up to a relative tolerance of $10^{-8}$, though higher tolerances yield accurate results, both qualitatively and quantitatively. 
%
%\daniel{CITATION NEEDED!, It's barely mentioned in Bunker/Jensen and not mentioned at all in Collins book. I just took it from Pavels SSP code.}).

% Using an iterative method to obtain a field at the next range step introduces method parameters, specifically those of GMRES. 
% Modifying the restart period, maximum iteration count, and absolute and relative tolerances can significantly speed up the method. 
% However, this comes at the cost of precision in the next range step, as the tolerances and maximum iteration count are modified.

%%%%%%%%%%%%%%%%%%%%%%%%%%%%
\subsubsection{Flat Bottom}
In the first numerical experiment acoustic field is computed for a shallow-water Pekeris waveguide, 
displayed with the constant water depth of $h_b=200$\,m (i.e., with flat horizontal bottom, see Fig.~\ref{fig:flat_bottom_schematic}a). 
The domain was discretized into 512 points in $y$, corresponding to the mesh size $17.187$\,m, 
or a bit less than 4 points per wavelength and into $500$ points in $x$, corresponding to the step size $h = 50$\,m.

The resulting acoustic field at exhibiting a typical circular interference pattern is displayed in Fig.~\ref{fig:ocean_flat} 
(we do not present reference solution, but it ideally coincides with the one obtained using SSSP). 
Note that the utilized discretization step in $y$ is coarser than a finite difference method permits \cite{petrovantoine2020}.

\begin{figure}[htb]
    \centering
    \includegraphics[width=\linewidth]{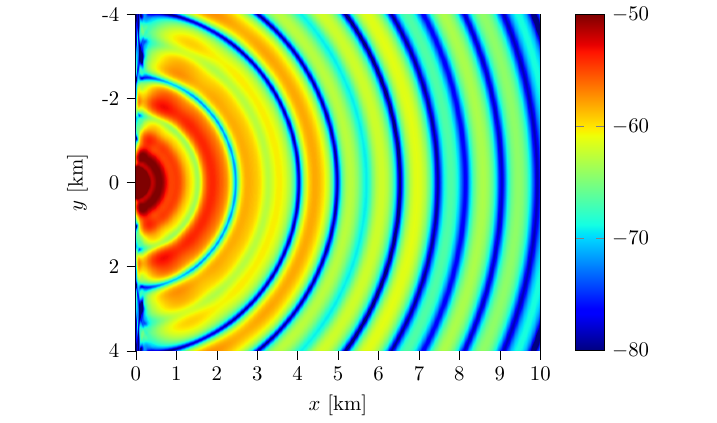}
    \caption{Acoustic pressure in dB re 1\,m at $z_r=30$\,m in the Pekeris wave\-guide.}
    \label{fig:ocean_flat}
\end{figure}

%%%%%%%%%%%%%%%%%%%%%%%%%%%%%%%%%%
\subsubsection{Wedge}
In the coastal wedge benchmark problem \cite{jensen2011computational} the water depth is a function of $y$: 
\begin{equation*}
  h_b = h_b(y) = h_{b,0} + \tan(\alpha) y\,,
\end{equation*}
where $h_{b,0} = 200$\,m, and the bottom slope angle is $\alpha \approx 2.86^\circ$.

A step size of $50$\,m was used again, and to demonstrate numerical convergence of the discretization in $y$, 
the resulting fields are displayed for 512 and 4096 $y$ points, corresponding to discretization constants of $17.1875$\,m and $\sim2.15$\,m, respectively. 
%\textcolor{red}{Daniel, is it what we see in Fig.~5a and Fig.~5b? Please cite the figure explicitly in the text. Also please state number of points per wavelength!}

The acoustic field in the wedge exhibits a remarkably different interference pattern due to the horizontal refraction effect, which causes acoustic energy to propagate towards the deeper part of the sea area. Consequently, the field is no longer symmetric with respect to the $x$-axis, as shown in Fig.~\ref{fig:ocean_wedge}.
In particular, the water-borne modes experience a cutoff around $z\approx30$, thus producing a silent zone near the coast.
The displayed fields agree qualitatively and quantitatively with the reference solution by the source image method \cite{deane1993analysis}, as shown in Fig.~\ref{fig:ocean_wedge}c, even when a coarse grid is used.

Despite not considering mode interactions, the SSSP method produces a highly accurate field with a small number of discretization points.
%\textcolor{red}{Daniel, can we check if my solution from \cite{PETROV2020115526} with $h=500$\,m and $h=1000$\,m can be also reproduced using 512 grid points in $y$? That would be spectacular!}

\begin{figure}[htb]
    \centering
    \includegraphics[width=\linewidth]{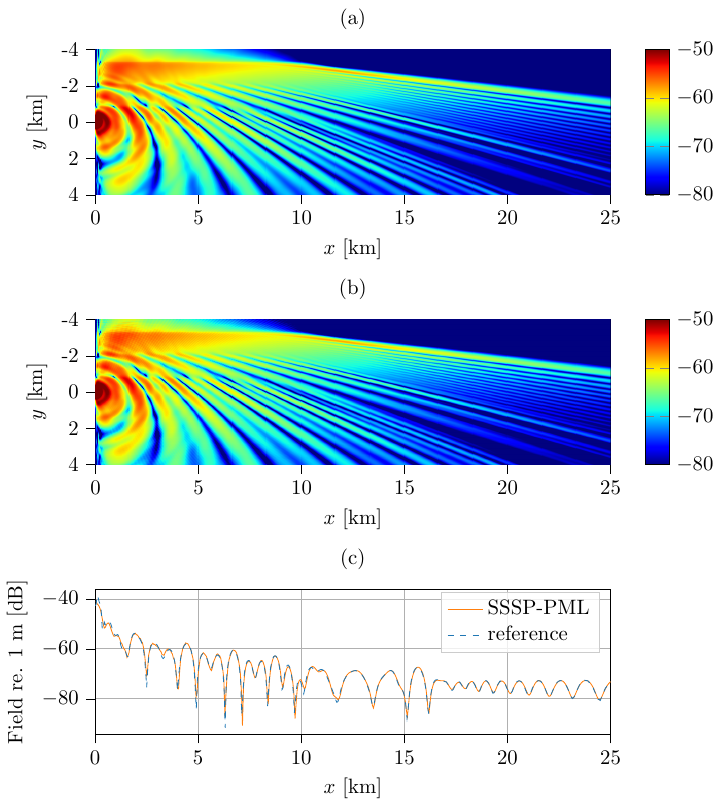}
    \caption{Acoustic field (in dB re 1 m) in the wedge at $z_r=30$\,m. (a) using 4096 points along $y$, (b) only using 512 points along $y$ and (c) comparing the field (b) with the method of source images.}
    % \caption{The acoustic field inside the wedge, measured at $z_r=50$\,m. 
    % (a) shows the field computed using 4096 points along $y$, 
    % (b) uses 512 points. This corresponds to a discretization constant of $15$\,m along $y$ for (b),
    % at a wavelength in water of $60$\,m, i.e.\ 4 points per wavelength, illustrating that the spectral method still yields physically correct results on very coarse grids. 
    % To compare the solution to a well-known method in (c), the field along the $x=0$, $z=z_r$ line was computed using the method of source images \cite{, tang2018method} as a reference.}
    \label{fig:ocean_wedge}
\end{figure}
% \begin{figure}
%     \centering
%     \begin{subfigure}{\linewidth}
%         \include{images/field_wedge_4096}
%         \label{fig:wedge_4096}
%     \end{subfigure}
%     \begin{subfigure}{\linewidth}
%         \include{images/wedge_512}
%         \label{fig:wedge_512}
%     \end{subfigure}
%     \begin{subfigure}{\linewidth}
%         \include{images/cross_wedge_512}
%     \end{subfigure}
%     \caption{The acoustic field inside the wedge, measured at $z_r=50$\,m. 
%     (a) shows the field computed using 4096 points along $y$, 
%     (b) uses 512. This corresponds to a discretization constant of $15$\,m along $y$ for (b), at a wavelength in water of $60$\,m, i.e.\ 4 points per wavelength. (b) illustrates how the spectral method still yields physically correct results on very coarse grids. To compare the solution to a well known method in (c), the field along the $x=0$, $z=z_r$ line was computed using the method of source images \cite{deane1993analysis, tang2018method} as a reference.}
%     \label{fig:wedge4096}
% \end{figure}

\clearpage

%%%%%%%%%%%%%%%%%%%%%%%%%%%%%%%%%%%%%%%%%%%%%
\section{Conclusion}\label{sec:5}

This study presents a spectral SSP algorithm for wave propagation modeling problems in the frequency domain. 
This marching numerical scheme integrates one-way counterparts of the Helmholtz equation, 
though it can be used for other elliptic problems, such as Lam\'e equations in elasticity theory or a coupled system of equations for normal mode amplitudes. The novelty of the proposed method consists in the fact that it uses a spectral representation (via FFT) to compute derivatives, whereas standard SSP-based solvers use finite differences for this purpose. 
In the presence of perfectly matched layers, the inversion of the transverse differential operator is accomplished using the iterative Bi-CGSTAB method. Although similar approaches have been proposed in the literature for certain classes of PDEs \cite{antoine2020pseudospectral,antoine2022pseudospectral}, SSSP method proposed here is the first one to deal with a pseudodifferential equation \eqref{eq:propagation}. The capabilities of the developed approach are demonstrated through two examples of realistic propagation scenarios from underwater acoustics and radiowave theory (see the GitLab repository \url{https://git.uni-wuppertal.de/walsken/sssp-pml}).

Theoretically, evaluating one Pad\'e term $w_j$, is of order $O(N)$. 
For example, the second-order finite-difference approximation of the transverse operator $\mathcal{X}$ requires $O(N\log(N))$ operations, 
whereas evaluating one Pad\'e term requires only $O(N)$. 
Thus, one might question the pursuit of making the SSP method spectral. 
However, this superficial complexity estimate applies only to traditional multi-core or single-core architectures based on serial CPUs. 
In the context of highly parallel computing systems, including modern GPUs and FPGAs, FFT libraries that are optimized to use massive concurrent threads can easily outperform traditional sequential algorithms for sparse matrix inversion. 
At the same time, the spectral accuracy of the derivative approximation within the SSSP method allows one to achieve lower computational errors with fewer discretization points, as shown in \cite{Daniel1}, 
especially in cases of continuous variation of the media parameters. 
This is particularly evident in mode parabolic equations in underwater acoustics, 
where the ocean environment typically exhibits slow and smooth variations in the $x$ and $y$ dimensions. 
The situation differs when a wide-angle parabolic equation is solved in the vertical plane (i.e., in the $x$ and $z$ coordinates). 
Although media parameters are expected to vary slowly in deep-water propagation scenarios, finite differences are much more practical in shallow-water geoacoustic waveguides with multiple interfaces where sound velocity has discontinuities. 
Thus, the presented approach could excel in specific problem classes and computational architectures rather than being a jack of all trades. 

On the other hand, we find that the question of whether SSP can be combined with spectral discretization in some way is also of theoretical interest, at least. In our opinion, this study provides a positive answer. 

%%%% Quantum computing
% 1) scheme already is an exponential propagator, which  is This is formally identical to quantum time evolution.
% 2) "Trotterization" vs. Padé approximation: these are two different approximations of the same object.
% 3) Our operator X splits naturally into: 
% differential part (diagonal in Fourier space)
% multiplication part (diagonal in real/physical space
% which is same structure as the Schrödinger equation split-step Fourier method, widely used in quantum algorithms
% 4) "Trotterization" of the square-root operator (via functional calculus / polynomial expansion)
% or approximation Approximate the generator \sqrt(1+X) by polynomial or Chebyshev expansion
% 
% The Pad\'e series of the form \eqref{eq:pade}. This is extremely interesting because Quantum algorithms also use to evaluate a summation via LCU (Linear Combination of Unitaries) methods.
% Hence, our scheme \eqref{eq:pade} is structurally a linear combination of resolvents and in quantum: linear combination of unitaries.
% BUT the PML destroy the unitarity of the evolution.

A promising direction for future work is developing quantum-compatible variants of the proposed spectral split-step Pad\'e (SSSP) framework. 
The exponential propagator underlying the method is structurally analogous to quantum time evolution and can be approximated using operator-splitting techniques, such as Trotter-Suzuki decompositions \cite{suzuki1992general} or modern Hamiltonian simulation methods \cite{lloyd1996universal}. 
In particular, the separation into Fourier-diagonal and multiplicative operators naturally aligns with quantum algorithms based on the quantum Fourier transform and the linear combination of unitaries approach \cite{berry2007efficient}. 
However, a key challenge remains: the treatment of the non-Hermitian PML operator would require extensions to open-system quantum simulation frameworks \cite{kushida2025quantum}.
% \Tmatthias{The applicability of the quantum approach depends on the application: sometimes one only wants to compute the value at the final point (receiver).}

%%%%%%%%%%%%%%%%%%%%%%%%%%%%%
%\textcolor{red}{Refs Levy book, paper Antoine PML, .....}
%\Tmatthias{cite refs below or comment them out}
%\nocite{duru2010stable}
%\nocite{antoine2020pseudospectral}
%\nocite{antoine2020perfectly}
%\nocite{antoine2022pseudospectral}
%\nocite{ehrhardt2008discrete}
%\nocite{levy2002transparent}
%\nocite{mikhin2004exact}
%% \nocite{varon2023approximation,varon2026deep}
%\nocite{porter1992kraken}
%\nocite{Hagstrom26}
%\nocite{petrov2024generalization}
%\nocite{petrov2022decomposition}
%\nocite{petrov2016wide}
%\nocite{petrov2016transparent}
%%%% BIBLIOGRAPHY %%%%
\bibliographystyle{plain}
\bibliography{references}

\appendix

\end{document}